\newtheorem{theorem}{Theorem}
\theoremstyle{plain}
\newtheorem{corollary}{Corollary}
\newtheorem{definition}{Definition}
\newtheorem{example}{Example}
\newtheorem{lemma}{Lemma}
\newtheorem{proposition}{Proposition}
\numberwithin{equation}{section}
\begin{document}
\title[On Harmonically $s$-Convex Functions]{Ostrowski type inequalities for
harmonically $s$-convex functions}
\author{\.{I}mdat \.{I}\c{s}can}
\address{Department of Mathematics, Faculty of Arts and Sciences,\\
Giresun University, 28100, Giresun, Turkey.}
\email{imdati@yahoo.com}
\subjclass[2000]{Primary 26D15; Secondary 26A51}
\keywords{Harmonically $s$-convex, Ostrowski type inequality,
Hermite-Hadamard type inequality }

\begin{abstract}
The author introduces the concept of harmonically $s$-convex functions and
establishes some Ostrowski type inequalities and Hermite-Hadamard type
inequality of these classes of functions.
\end{abstract}

\maketitle

\section{Introduction}

Let $f:I\mathbb{\rightarrow R}$, where $I\subseteq \mathbb{R}$ is an
interval, be a mapping differentiable in $I^{\circ }$ (the interior of $I$)
and let $a,b\in I^{\circ }$ with $a<b.$ If $\left\vert f^{\prime
}(x)\right\vert \leq M,$ for all $x\in \left[ a,b\right] ,$ then the
following inequality holds%
\begin{equation}
\left\vert f(x)-\frac{1}{b-a}\int_{a}^{b}f(t)dt\right\vert \leq M(b-a)\left[ 
\frac{1}{4}+\frac{\left( x-\frac{a+b}{2}\right) ^{2}}{\left( b-a\right) ^{2}}%
\right]  \label{1-1}
\end{equation}%
for all $x\in \left[ a,b\right] .$ This inequality is known in the
literature as the Ostrowski inequality (see \cite{O38}), which gives an
upper bound for the approximation of the integral average $\frac{1}{b-a}%
\int_{a}^{b}f(t)dt$ by the value $f(x)$ at point\ $x\in \left[ a,b\right] $.
For some results which generalize, improve and extend the inequalities(\ref%
{1-1}) we refer the reader to the recent papers (see \cite{ADDC10,L12} ).

In \cite{HM94}, Hudzik and Maligranda considered the following class of
functions:

\begin{definition}
A function $f:I\subseteq 
\mathbb{R}
_{+}\rightarrow 
\mathbb{R}
$ where $%
\mathbb{R}
_{+}=\left[ 0,\infty \right) $, is said to be $s$-convex in the second sense
if%
\begin{equation*}
f\left( \alpha x+\beta y\right) \leq \alpha ^{s}f(x)+\beta ^{s}f(y)
\end{equation*}%
for all $x,y\in I$ and $\alpha ,\beta \geq 0$ with $\alpha +\beta =1$ and $s$
fixed in $\left( 0,1\right] $. They denoted this by $K_{s}^{2}.$
\end{definition}

It can be easily seen that for $s=1$, $s$-convexity reduces to ordinary
convexity of functions defined on $[0,\infty )$.

In \cite{DF99}, Dragomir and Fitzpatrick proved a variant of
Hermite-Hadamard inequality which holds for the $s$-convex functions.

\begin{theorem}
Suppose that $f:%
\mathbb{R}
_{+}\mathbb{\rightarrow }%
\mathbb{R}
_{+}$ is an $s$-convex function in the second sense, where $s\in \lbrack
0,1) $ and let $a,b\in \lbrack 0,\infty )$, $a<b$. If $f\in L\left[ a,b%
\right] $, then the following inequalities hold 
\begin{equation}
2^{s-1}f\left( \frac{a+b}{2}\right) \leq \frac{1}{b-a}\dint%
\limits_{a}^{b}f(x)dx\leq \frac{f(a)+f(b)}{s+1}\text{.}  \label{1-2}
\end{equation}%
the constant $k=\frac{1}{s+1}$ is the best possible in the second inequality
in (\ref{1-2}).
\end{theorem}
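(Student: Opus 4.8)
The plan is to prove the two inequalities separately from the definition of $s$-convexity and then to certify sharpness of the constant by producing an extremal function. For the left inequality I would exploit $s$-convexity at the midpoint. Writing $\frac{a+b}{2}=\frac12\bigl(ta+(1-t)b\bigr)+\frac12\bigl((1-t)a+tb\bigr)$ for $t\in[0,1]$ and applying the defining inequality of $K_s^2$ with $\alpha=\beta=\frac12$ gives
\[
f\left(\frac{a+b}{2}\right)\leq 2^{-s}\left[f\bigl(ta+(1-t)b\bigr)+f\bigl((1-t)a+tb\bigr)\right].
\]
Integrating in $t$ over $[0,1]$ and using the change of variable $x=ta+(1-t)b$ (so $dx=(a-b)\,dt$), each of the two integrals on the right collapses to $\frac{1}{b-a}\int_a^b f(x)\,dx$, producing the factor $2^{1-s}$; rearranging yields $2^{s-1}f\!\left(\frac{a+b}{2}\right)\le \frac{1}{b-a}\int_a^b f(x)\,dx$.

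For the right inequality I would apply $s$-convexity directly along the chord: for $x=ta+(1-t)b$ with $t\in[0,1]$,
\[
f\bigl(ta+(1-t)b\bigr)\leq t^{s}f(a)+(1-t)^{s}f(b).
\]
Integrating over $t\in[0,1]$, applying the same substitution on the left, and using the elementary integrals $\int_0^1 t^s\,dt=\int_0^1(1-t)^s\,dt=\frac{1}{s+1}$ delivers $\frac{1}{b-a}\int_a^b f(x)\,dx\le \frac{f(a)+f(b)}{s+1}$, which is the second inequality.

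To show that $k=\frac{1}{s+1}$ cannot be improved, I would test the bound on $f(x)=x^{s}$ on $[0,\infty)$. The subadditivity of $t\mapsto t^{s}$ for $s\in(0,1)$ gives $(\alpha x+\beta y)^{s}\le(\alpha x)^{s}+(\beta y)^{s}=\alpha^{s}x^{s}+\beta^{s}y^{s}$, so $f\in K_s^2$. Choosing $a=0$ and $b=1$ produces $\frac{1}{b-a}\int_a^b f(x)\,dx=\int_0^1 x^{s}\,dx=\frac{1}{s+1}$, while $f(a)+f(b)=0+1=1$. Hence any constant $k$ for which the right inequality holds for all admissible $f$ must satisfy $k\ge\frac{1}{s+1}$, establishing optimality.

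I expect the two principal inequalities to be essentially routine once the affine substitution is set up, so the genuinely delicate step is the sharpness argument: one must verify carefully that the candidate extremal $f(x)=x^{s}$ really belongs to $K_s^2$ (via subadditivity of the power map rather than ordinary convexity) and that the endpoint choice $a=0$ is legitimate, so that the value $f(0)=0$ enters correctly into the computation.
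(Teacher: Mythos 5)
Your proof is correct. The paper does not actually prove this statement --- it is quoted from Dragomir and Fitzpatrick \cite{DF99} --- but your argument is the standard one from that source, and it is precisely the strategy the paper itself uses to prove the harmonic analogue (Theorem \ref{2.2}): the midpoint substitution $x=ta+(1-t)b$, $y=(1-t)a+tb$ followed by integration in $t$ for the left inequality, the chord inequality integrated in $t$ for the right one, and $f(x)=x^{s}$ with $a=0$, $b=1$ (where equality is attained) for sharpness.
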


The above inequalities are sharp. For recent results and generalizations
concerning $s$-convex functions see \cite{ADK11,DF99,HBI09,I13b,KBO07}.

In \cite{I13c}, the author gave harmonically convex and established
Hermite-Hadamard's inequality for harmonically convex functions as follows:

\begin{definition}
Let $I\subset 
\mathbb{R}
\backslash \left\{ 0\right\} $ be a real interval. A function $%
f:I\rightarrow 
\mathbb{R}
$ is said to be harmonically convex, if \ 
\begin{equation}
f\left( \frac{xy}{tx+(1-t)y}\right) \leq tf(y)+(1-t)f(x)  \label{1-3}
\end{equation}%
for all $x,y\in I$ and $t\in \lbrack 0,1]$. If the inequality in (\ref{1-3})
is reversed, then $f$ is said to be harmonically concave.
\end{definition}

\begin{theorem}
Let $f:I\subset 
\mathbb{R}
\backslash \left\{ 0\right\} \rightarrow 
\mathbb{R}
$ be a harmonically convex function and $a,b\in I$ with $a<b.$ If $f\in
L[a,b]$ then the following inequalities hold 
\begin{equation}
f\left( \frac{2ab}{a+b}\right) \leq \frac{ab}{b-a}\dint\limits_{a}^{b}\frac{%
f(x)}{x^{2}}dx\leq \frac{f(a)+f(b)}{2}.  \label{1-4}
\end{equation}%
The \ above inequalities are sharp.
\end{theorem}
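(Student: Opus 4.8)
The plan is to prove the Hermite–Hadamard type inequality (1-4) for harmonically convex functions by reducing everything to the defining inequality (1-3) via a substitution that harmonizes the integral. The key observation is that for a harmonically convex function, the natural variable is not $x$ but $1/x$, so I would introduce the change of variable $x = \frac{ab}{tb+(1-t)a}$, which maps $t\in[0,1]$ to $x\in[a,b]$ (with $x=a$ at $t=0$ and $x=b$ at $t=1$). Differentiating gives $\frac{dx}{x^2} = \frac{b-a}{ab}\,dt$, so that
\begin{equation*}
\frac{ab}{b-a}\int_a^b \frac{f(x)}{x^2}\,dx = \int_0^1 f\!\left(\frac{ab}{tb+(1-t)a}\right)dt.
\end{equation*}
This is the central computation that turns the weighted integral into a clean integral over $[0,1]$ and makes both inequalities tractable.

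For the right-hand inequality, I would apply the definition (1-3) directly inside the integral. Writing the argument as $\frac{ab}{tb+(1-t)a}=\frac{ab}{ta+(1-t)b}$ after relabeling (or simply matching the form $\frac{xy}{tx+(1-t)y}$ with $x=b$, $y=a$), harmonic convexity yields
\begin{equation*}
f\!\left(\frac{ab}{tb+(1-t)a}\right)\leq t f(a)+(1-t)f(b).
\end{equation*}
Integrating this pointwise bound over $t\in[0,1]$ and using $\int_0^1 t\,dt=\int_0^1(1-t)\,dt=\tfrac12$ gives the upper bound $\frac{f(a)+f(b)}{2}$, completing the second inequality.

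For the left-hand inequality, I would use the midpoint-type argument. Setting $t=\tfrac12$ in the harmonic mean shows that $\frac{2ab}{a+b}$ is the value of the argument at the midpoint, so the goal is to bound $f\!\left(\frac{2ab}{a+b}\right)$ from above by the integral. The idea is to pair the points $t$ and $1-t$: for each $t$, applying (1-3) with the two points $\frac{ab}{tb+(1-t)a}$ and $\frac{ab}{(1-t)b+ta}$ and weight $\tfrac12$ shows that their harmonic combination is exactly $\frac{2ab}{a+b}$, giving
\begin{equation*}
f\!\left(\frac{2ab}{a+b}\right)\leq \tfrac12\,f\!\left(\frac{ab}{tb+(1-t)a}\right)+\tfrac12\,f\!\left(\frac{ab}{(1-t)b+ta}\right).
\end{equation*}
Integrating over $t\in[0,1]$ and noting that both integrals on the right equal the common value $\int_0^1 f(\cdots)\,dt$ (by the symmetry $t\mapsto 1-t$) yields $f\!\left(\frac{2ab}{a+b}\right)\leq \int_0^1 f\!\left(\frac{ab}{tb+(1-t)a}\right)dt$, which is the left inequality.

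The main obstacle I anticipate is purely bookkeeping: verifying that the substitution correctly matches the endpoints and that the algebraic identity $\frac{ab}{tb+(1-t)a}$ is genuinely the harmonic-mean form $\frac{xy}{tx+(1-t)y}$ required by the definition, including checking the orientation of $t$ versus the labels $a,b$ so that the coefficients of $f(a)$ and $f(b)$ come out correctly. The analytic content is light; the care lies in getting the change of variable and the symmetry pairing exactly right so that the weights integrate to $\tfrac12$ and the midpoint identity $t=\tfrac12\Rightarrow \frac{2ab}{a+b}$ holds.
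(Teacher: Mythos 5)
Your proposal is correct and follows essentially the same route the paper uses for its generalization (the proof of Theorem \ref{2.2}, which reduces to this statement at $s=1$): the midpoint pairing of $\frac{ab}{tb+(1-t)a}$ with $\frac{ab}{ta+(1-t)b}$ for the left inequality, and applying the defining inequality at $x=a$, $y=b$ and integrating over $t$ for the right. The only slip is that your substitution sends $t=0$ to $x=b$ and $t=1$ to $x=a$ (not the other way around), but the sign from $d(1/x)=-dx/x^2$ reverses the orientation so your identity $\frac{ab}{b-a}\int_a^b \frac{f(x)}{x^2}\,dx=\int_0^1 f\bigl(\frac{ab}{tb+(1-t)a}\bigr)\,dt$ still holds.
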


The goal of this paper is to introduce the concept of the harmonically $s$%
-convex functions, obtain the similar the inequalities (\ref{1-4}) for
harmonically $s$-convex functions and establish some new inequalities of
Ostrowski type for harmonically $s$-convex functions.

\section{Main Results}

\begin{definition}
Let $I\subset \left( 0,\infty \right) $ be an real interval. A function $%
f:I\rightarrow 
\mathbb{R}
$ is said to be harmonically $s-$convex (concave), if \ 
\begin{equation}
f\left( \frac{xy}{tx+(1-t)y}\right) \leq \left( \geq \right)
t^{s}f(y)+(1-t)^{s}f(x)  \label{2-1}
\end{equation}%
for all $x,y\in I$ , $t\in \lbrack 0,1]$ and for some fixed $s\in \left( 0,1%
\right] $.
\end{definition}

\begin{proposition}
Let $I\subset \left( 0,\infty \right) $ be an real interval and $%
f:I\rightarrow 
\mathbb{R}
$ is a function, then ;

\begin{enumerate}
\item if f is $s$-convex and nondecreasing function then f is harmonically $s
$-convex.

\item if f is harmonically $s$-convex and nonincreasing function then f is $s
$-convex.
\end{enumerate}
\end{proposition}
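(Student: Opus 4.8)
The plan is to reduce both implications to a single auxiliary comparison between the ``harmonic'' argument $\frac{xy}{tx+(1-t)y}$ and the ordinary convex combination $(1-t)x+ty$, and then to push this comparison through the monotonicity of $f$ together with the appropriate convexity hypothesis. First I would record the $s$-convexity condition in the form $f\left((1-t)x+ty\right)\leq (1-t)^{s}f(x)+t^{s}f(y)$ (obtained from the definition with $\alpha=1-t$, $\beta=t$), so that its right-hand side matches \emph{exactly} the right-hand side $t^{s}f(y)+(1-t)^{s}f(x)$ appearing in the definition of harmonic $s$-convexity. This alignment of the two right-hand sides is what makes the two notions comparable, and it is the reason the same auxiliary estimate serves both parts.

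The key step is the elementary inequality
\[
\frac{xy}{tx+(1-t)y}\leq (1-t)x+ty, \qquad x,y\in I,\ t\in[0,1].
\]
Since all quantities are positive, this is equivalent, after multiplying by $tx+(1-t)y>0$, to $xy\leq\left[(1-t)x+ty\right]\left[tx+(1-t)y\right]$. Expanding the product and collecting terms, the right-hand side equals $xy+t(1-t)(x-y)^{2}$, so the inequality follows at once from $t(1-t)(x-y)^{2}\geq0$. I expect this completion-of-square identity to be the only genuine computation in the argument; everything else is a matter of chaining inequalities in the correct direction.

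For part (1), assume $f$ is $s$-convex and nondecreasing. Applying monotonicity to the key inequality gives $f\left(\frac{xy}{tx+(1-t)y}\right)\leq f\left((1-t)x+ty\right)$, and then $s$-convexity yields $f\left((1-t)x+ty\right)\leq (1-t)^{s}f(x)+t^{s}f(y)$. Concatenating the two bounds produces $f\left(\frac{xy}{tx+(1-t)y}\right)\leq t^{s}f(y)+(1-t)^{s}f(x)$, which is precisely harmonic $s$-convexity.

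For part (2), assume $f$ is harmonically $s$-convex and nonincreasing. Now monotonicity \emph{reverses} the first comparison, giving $f\left((1-t)x+ty\right)\leq f\left(\frac{xy}{tx+(1-t)y}\right)$, while harmonic $s$-convexity supplies $f\left(\frac{xy}{tx+(1-t)y}\right)\leq t^{s}f(y)+(1-t)^{s}f(x)$. Combining these gives $f\left((1-t)x+ty\right)\leq (1-t)^{s}f(x)+t^{s}f(y)$, i.e.\ $f$ is $s$-convex. The only subtlety, and hence the place to be careful, is the direction of the monotonicity step: it is the sign of the monotonicity that decides whether the auxiliary inequality is chained forwards (part 1) or backwards (part 2), and this is exactly why ``nondecreasing'' is required for one implication and ``nonincreasing'' for the other.
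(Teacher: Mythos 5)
Your proof is correct and follows essentially the same route as the paper: both arguments rest on the key comparison $\frac{xy}{tx+(1-t)y}\leq ty+(1-t)x$ and then chain it through the monotonicity and convexity hypotheses in the appropriate direction. The only difference is cosmetic — the paper obtains this inequality by citing the harmonic convexity of the identity map (with a reference), whereas you verify it directly by the completion-of-square identity, and you also write out the "easily obtained" chaining that the paper leaves to the reader.
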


\begin{proof}
Since $f:\left( 0,\infty \right) \rightarrow 
\mathbb{R}
,\ f(x)=x,$ harmonically convex function, we have 
\begin{equation}
\frac{xy}{tx+(1-t)y}\leq ty+(1-t)x  \label{2-1a}
\end{equation}%
for all $x,y\in \left( 0,\infty \right) $ , $t\in \lbrack 0,1]$ (see also 
\cite[page 4]{CD11}). The proposition (1) and (2) is easily obtained from
the inequality (\ref{2-1a}).
\end{proof}

\begin{example}
Let $s\in \left( 0,1\right] $ and $f:\left( 0,1\right] \rightarrow \left( 0,1%
\right] ,\ f(x)=x^{s}$. Since $f$ is $s$-convex (see \ \cite{HM94}) and
nondecreasing function, f is harmonically $s-$convex.
\end{example}

The following result of the Hermite-Hadamard type holds.

\begin{theorem}
\label{2.2}Let $f:I\subset \left( 0,\infty \right) \rightarrow 
\mathbb{R}
$ be an harmonically $s$-convex function and $a,b\in I$ with $a<b.$ If $f\in
L[a,b]$ then the following inequalities hold:%
\begin{equation}
2^{s-1}f\left( \frac{2ab}{a+b}\right) \leq \frac{ab}{b-a}\dint\limits_{a}^{b}%
\frac{f(x)}{x^{2}}dx\leq \frac{f(a)+f(b)}{s+1}.  \label{2-2}
\end{equation}
\end{theorem}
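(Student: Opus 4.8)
The plan is to imitate the proofs of the harmonically convex inequality (\ref{1-4}) and of the Dragomir--Fitzpatrick estimate (\ref{1-2}), the essential device being the change of variables that linearizes the harmonic substitution. For the \emph{right-hand} inequality I would set $x=a$ and $y=b$ in the defining inequality (\ref{2-1}), obtaining
\begin{equation*}
f\!\left(\frac{ab}{ta+(1-t)b}\right)\leq t^{s}f(b)+(1-t)^{s}f(a)
\end{equation*}
for every $t\in[0,1]$, and then integrate in $t$. The right side integrates at once to $\int_{0}^{1}\bigl[t^{s}f(b)+(1-t)^{s}f(a)\bigr]\,dt=\dfrac{f(a)+f(b)}{s+1}$. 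For the left side I would use the substitution $u=\dfrac{ab}{ta+(1-t)b}$, for which $\dfrac{du}{dt}=\dfrac{(b-a)u^{2}}{ab}>0$, so $dt=\dfrac{ab}{(b-a)u^{2}}\,du$, and which sends $t=0\mapsto u=a$, $t=1\mapsto u=b$. This converts $\int_{0}^{1}f\!\left(\frac{ab}{ta+(1-t)b}\right)dt$ exactly into $\dfrac{ab}{b-a}\int_{a}^{b}\dfrac{f(u)}{u^{2}}\,du$, the middle term of (\ref{2-2}), giving the second inequality.

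For the \emph{left-hand} inequality I would introduce the two symmetric points
\begin{equation*}
X_{t}=\frac{ab}{ta+(1-t)b},\qquad X_{1-t}=\frac{ab}{(1-t)a+tb},
\end{equation*}
and observe that $\dfrac{1}{X_{t}}+\dfrac{1}{X_{1-t}}=\dfrac{a+b}{ab}$, so that their harmonic mean $\dfrac{2X_{t}X_{1-t}}{X_{t}+X_{1-t}}$ equals the constant $\dfrac{2ab}{a+b}$ for all $t$. Applying (\ref{2-1}) with $x=X_{1-t}$, $y=X_{t}$ and convexity parameter $\tfrac12$ then yields
\begin{equation*}
f\!\left(\frac{2ab}{a+b}\right)\leq \frac{1}{2^{s}}\bigl[f(X_{t})+f(X_{1-t})\bigr].
\end{equation*}
Integrating over $t\in[0,1]$ and noting, via the same substitution as above (and the change $t\mapsto 1-t$ for the second term), that each of $\int_{0}^{1}f(X_{t})\,dt$ and $\int_{0}^{1}f(X_{1-t})\,dt$ equals $\dfrac{ab}{b-a}\int_{a}^{b}\dfrac{f(x)}{x^{2}}\,dx$, I would obtain $f\!\left(\frac{2ab}{a+b}\right)\leq 2^{1-s}\dfrac{ab}{b-a}\int_{a}^{b}\dfrac{f(x)}{x^{2}}\,dx$, which is the desired left inequality after multiplication by $2^{s-1}$.

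The computations are routine once the substitution is in place; the only genuinely delicate point is the choice of the paired points $X_{t}$ and $X_{1-t}$ whose harmonic mean is constant, since this is exactly what makes the midpoint value $f\!\left(\frac{2ab}{a+b}\right)$ emerge after integration. I therefore expect the main obstacle to be bookkeeping rather than conceptual: carefully verifying the orientation and Jacobian of $u=\frac{ab}{ta+(1-t)b}$, and checking that setting $s=1$ recovers the known harmonically convex result (\ref{1-4}) as a consistency test.
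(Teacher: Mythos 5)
Your proposal is correct and follows essentially the same route as the paper: the right-hand inequality is obtained by setting $x=a$, $y=b$ in (\ref{2-1}) and integrating over $t\in[0,1]$, and the left-hand one by applying (\ref{2-1}) with parameter $\tfrac12$ to the paired points $\frac{ab}{ta+(1-t)b}$ and $\frac{ab}{tb+(1-t)a}$ (your $X_t$, $X_{1-t}$), whose harmonic mean is the constant $\frac{2ab}{a+b}$, then integrating. Your verification of the substitution $u=\frac{ab}{ta+(1-t)b}$ and its Jacobian just makes explicit a step the paper leaves as a remark.
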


\begin{proof}
Since $f:I\rightarrow 
\mathbb{R}
$ is an harmonically $s$-convex function, we have, for all $x,y\in I$ (with $%
t=\frac{1}{2}$ in the inequality (\ref{2-1}) )%
\begin{equation*}
f\left( \frac{2xy}{x+y}\right) \leq \frac{f(y)+f(x)}{2^{s}}
\end{equation*}%
Choosing $x=\frac{ab}{ta+(1-t)b},\ y=\frac{ab}{tb+(1-t)a}$, we get%
\begin{equation*}
f\left( \frac{2ab}{a+b}\right) \leq \frac{f\left( \frac{ab}{tb+(1-t)a}%
\right) +f\left( \frac{ab}{ta+(1-t)b}\right) }{2^{s}}
\end{equation*}%
Further, integrating for $t\in \lbrack 0,1]$, we have%
\begin{equation}
f\left( \frac{2ab}{a+b}\right) \leq \frac{1}{2^{s}}\left[ \dint%
\limits_{0}^{1}f\left( \frac{ab}{tb+(1-t)a}\right)
dt+\dint\limits_{0}^{1}f\left( \frac{ab}{ta+(1-t)b}\right) dt\right] 
\label{2-2a}
\end{equation}%
Since each of the integrals is equal to $\frac{ab}{b-a}\dint\limits_{a}^{b}%
\frac{f(x)}{x^{2}}dx$, we obtain the left-hand side of the inequality (\ref%
{2-2}) from (\ref{2-2a}).

The proof of the second inequality follows by using (\ref{2-1}) with $x=a$
and $y=b$ and integrating with respect to $t$ over $[0,1]$.
\end{proof}

In order to prove our main theorems, we need the following lemma:

\begin{lemma}
\label{2.1}Let $f:I\subset 
\mathbb{R}
\backslash \left\{ 0\right\} \rightarrow 
\mathbb{R}
$ be a differentiable function on $I^{\circ }$ and $a,b\in I$ with $a<b$. If 
$f^{\prime }\in L[a,b]$ then 
\begin{eqnarray*}
&&f(x)-\frac{ab}{b-a}\dint\limits_{a}^{b}\frac{f(u)}{u^{2}}du \\
&=&\frac{ab}{b-a}\left\{ \left( x-a\right) ^{2}\dint\limits_{0}^{1}\frac{t}{%
\left( ta+(1-t)x\right) ^{2}}f^{\prime }\left( \frac{ax}{ta+(1-t)x}\right)
dt\right. \\
&&-\left. \left( b-x\right) ^{2}\dint\limits_{0}^{1}\frac{t}{\left(
tb+(1-t)x\right) ^{2}}f^{\prime }\left( \frac{bx}{tb+(1-t)x}\right)
dt\right\}
\end{eqnarray*}
\end{lemma}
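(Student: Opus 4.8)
The plan is to verify the identity by starting from the two integrals on the right-hand side, evaluating each one by integration by parts, and then using a change of variable to convert the leftover $t$-integrals back into integrals of the form $\int \frac{f(u)}{u^{2}}\,du$. Write $I_{1}=\int_{0}^{1}\frac{t}{(ta+(1-t)x)^{2}}f^{\prime }\left( \frac{ax}{ta+(1-t)x}\right) dt$ and $I_{2}=\int_{0}^{1}\frac{t}{(tb+(1-t)x)^{2}}f^{\prime }\left( \frac{bx}{tb+(1-t)x}\right) dt$ for the two integrals appearing in the statement.

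First I would record the key derivative identity. Since $ta+(1-t)x=x+t(a-x)$, a direct computation gives
\[
\frac{d}{dt}\left[ f\left( \frac{ax}{ta+(1-t)x}\right) \right] =\frac{ax(x-a)}{\left( ta+(1-t)x\right) ^{2}}f^{\prime }\left( \frac{ax}{ta+(1-t)x}\right) ,
\]
so the integrand of $I_{1}$, stripped of its factor $t$, equals $\frac{1}{ax(x-a)}$ times this derivative. Integrating $I_{1}$ by parts against the factor $t$, the boundary term picks up the value at $t=1$ (namely $f(x)$), while the contribution at $t=0$ vanishes because of the factor $t$; this leaves
\[
I_{1}=\frac{1}{ax(x-a)}\left[ f(x)-\int_{0}^{1}f\left( \frac{ax}{ta+(1-t)x}\right) dt\right] .
\]

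Next I would treat the remaining integral by the substitution $u=\frac{ax}{ta+(1-t)x}$, which sends $t=0\mapsto u=a$ and $t=1\mapsto u=x$ and yields $dt=\frac{ax}{(x-a)u^{2}}\,du$; it therefore converts $\int_{0}^{1}f(\cdots )\,dt$ into $\frac{ax}{x-a}\int_{a}^{x}\frac{f(u)}{u^{2}}\,du$. Multiplying by $(x-a)^{2}$ then gives $(x-a)^{2}I_{1}=\frac{(x-a)f(x)}{ax}-\int_{a}^{x}\frac{f(u)}{u^{2}}\,du$. The identical computation applied to $I_{2}$, with $b$ in place of $a$ and using $(b-x)^{2}=(x-b)^{2}$ together with a reversal of the resulting limits, produces $(b-x)^{2}I_{2}=\frac{(x-b)f(x)}{bx}+\int_{x}^{b}\frac{f(u)}{u^{2}}\,du$. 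Subtracting, I would collect the coefficient of $f(x)$ via $\frac{x-a}{ax}-\frac{x-b}{bx}=\frac{1}{a}-\frac{1}{b}=\frac{b-a}{ab}$ and merge $\int_{a}^{x}+\int_{x}^{b}=\int_{a}^{b}$, so that $(x-a)^{2}I_{1}-(b-x)^{2}I_{2}=\frac{b-a}{ab}f(x)-\int_{a}^{b}\frac{f(u)}{u^{2}}\,du$. Multiplying by $\frac{ab}{b-a}$ reproduces $f(x)-\frac{ab}{b-a}\int_{a}^{b}\frac{f(u)}{u^{2}}\,du$, which is the left-hand side.

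The step I expect to require the most care is the sign and orientation bookkeeping: the two change-of-variable steps run toward $x$ from opposite endpoints ($a$ on one side, $b$ on the other), so the limits of the converted integrals are oriented differently, and one must be careful that the two $f(x)$ boundary contributions combine with the exact coefficient $\frac{b-a}{ab}$ rather than cancelling. Once the derivative identity above is spotted, the integration-by-parts and substitution manipulations themselves are routine, so this is the only genuine obstacle.
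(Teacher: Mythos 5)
Your proof is correct and takes essentially the same route as the paper: the paper likewise rewrites each integrand as $\tfrac{t}{ax(x-a)}\,d f\bigl(\tfrac{ax}{ta+(1-t)x}\bigr)$ (and its $b$-analogue), integrates by parts so that the $t=1$ boundary terms produce $f(x)$, and changes variables to turn the leftover integrals into $\int \frac{f(u)}{u^{2}}\,du$ over $[a,x]$ and $[x,b]$. Your sign and coefficient bookkeeping, including $\frac{x-a}{ax}-\frac{x-b}{bx}=\frac{b-a}{ab}$, checks out.
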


\begin{proof}
Integrating by part and changing variables of integration yields%
\begin{eqnarray*}
&&\frac{ab}{b-a}\left\{ \left( x-a\right) ^{2}\dint\limits_{0}^{1}\frac{t}{%
\left( ta+(1-t)x\right) ^{2}}f^{\prime }\left( \frac{ax}{ta+(1-t)x}\right)
dt\right.  \\
&&-\left. \left( b-x\right) ^{2}\dint\limits_{0}^{1}\frac{t}{\left(
tb+(1-t)x\right) ^{2}}f^{\prime }\left( \frac{bx}{tb+(1-t)x}\right)
dt\right\} 
\end{eqnarray*}%
\begin{eqnarray*}
&=&\frac{1}{x(b-a)}\left[ b\left( x-a\right) \dint\limits_{0}^{1}tdf\left( 
\frac{ax}{ta+(1-t)x}\right) +a\left( b-x\right)
\dint\limits_{0}^{1}tdf\left( \frac{bx}{tb+(1-t)x}\right) \right]  \\
&=&\frac{1}{x(b-a)}\left[ b\left( x-a\right) \left\{ \left. tf\left( \frac{ax%
}{ta+(1-t)x}\right) \right\vert _{0}^{1}-\dint\limits_{0}^{1}f\left( \frac{ax%
}{ta+(1-t)x}\right) dt\right\} \right]  \\
&&+\frac{1}{x(b-a)}\left[ a\left( b-x\right) \left\{ \left. tf\left( \frac{bx%
}{tb+(1-t)x}\right) \right\vert _{0}^{1}-\dint\limits_{0}^{1}f\left( \frac{bx%
}{tb+(1-t)x}\right) dt\right\} \right]  \\
&=&f(x)-\frac{ab}{b-a}\dint\limits_{a}^{b}\frac{f(u)}{u^{2}}du.
\end{eqnarray*}
\end{proof}

\begin{theorem}
\label{2.3}Let $f:I\subset \left( 0,\infty \right) \rightarrow 
\mathbb{R}
$ be a differentiable function on $I^{\circ }$, $a,b\in I$ with $a<b,$ and $%
f^{\prime }\in L[a,b].$ If $\left\vert f^{\prime }\right\vert ^{q}$ is
harmonically $s$- convex on $[a,b]$ for $q\geq 1,$ then for all $x\in \left[
a,b\right] $, we have%
\begin{equation}
\left\vert f(x)-\frac{ab}{b-a}\dint\limits_{a}^{b}\frac{f(u)}{u^{2}}%
du\right\vert   \label{2-3}
\end{equation}%
\begin{eqnarray*}
&\leq &\frac{ab}{b-a}\left\{ \left( x-a\right) ^{2}\left( \lambda
_{1}(a,x,s,q,q)\left\vert f^{\prime }\left( x\right) \right\vert
^{q}+\lambda _{2}((a,x,s,q,q)\left\vert f^{\prime }\left( a\right)
\right\vert ^{q}\right) ^{\frac{1}{q}}\right.  \\
&&+\left. \left( b-x\right) ^{2}\left( \lambda _{3}(b,x,s,q,q)\left\vert
f^{\prime }\left( x\right) \right\vert ^{q}+\lambda
_{4}(b,x,s,q,q)\left\vert f^{\prime }\left( b\right) \right\vert ^{q}\right)
^{\frac{1}{q}}\right\} ,
\end{eqnarray*}%
where 
\begin{equation*}
\lambda _{1}(a,x,s,\vartheta ,\rho )=\frac{\beta \left( \rho +s+1,1\right) }{%
x^{2\vartheta }}._{2}F_{1}\left( 2\vartheta ,\rho +s+1;\rho +s+2;1-\frac{a}{x%
}\right) ,
\end{equation*}%
\begin{equation*}
\lambda _{2}(a,x,s,\vartheta ,\rho )=\frac{\beta \left( \rho +1,1\right) }{%
x^{2q}}._{2}F_{1}\left( 2q,\rho +1;\rho +s+2;1-\frac{a}{x}\right) ,
\end{equation*}%
\begin{equation*}
\lambda _{3}(b,x,s,\vartheta ,\rho )=\frac{\beta \left( 1,\rho +s+1\right) }{%
b^{2\vartheta }}._{2}F_{1}\left( 2\vartheta ,1;\rho +s+2;1-\frac{x}{b}%
\right) ,
\end{equation*}%
\begin{equation*}
\lambda _{4}(b,x,s,\vartheta ,\rho )=\frac{\beta \left( s+1,\rho +1\right) }{%
b^{2\vartheta }}._{2}F_{1}\left( 2\vartheta ,s+1;\rho +s+2;1-\frac{x}{b}%
\right) ,
\end{equation*}%
$\beta $ is Euler Beta function defined by%
\begin{equation*}
\beta \left( x,y\right) =\frac{\Gamma (x)\Gamma (y)}{\Gamma (x+y)}%
=\dint\limits_{0}^{1}t^{x-1}\left( 1-t\right) ^{y-1}dt,\ \ x,y>0,
\end{equation*}%
and $_{2}F_{1}$ is hypergeometric function defined by 
\begin{equation*}
_{2}F_{1}\left( a,b;c;z\right) =\frac{1}{\beta \left( b,c-b\right) }%
\dint\limits_{0}^{1}t^{b-1}\left( 1-t\right) ^{c-b-1}\left( 1-zt\right)
^{-a}dt,\ c>b>0,\ \left\vert z\right\vert <1\text{ (see \cite{AS65}).}
\end{equation*}
\end{theorem}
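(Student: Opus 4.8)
The plan is to start from the integral identity of Lemma \ref{2.1}, pass to absolute values, and use the triangle inequality on the two-term right-hand side. Since $t$ and the denominators are positive, this reduces the theorem to estimating the two integrals
\begin{equation*}
I_1=\int_0^1\frac{t}{(ta+(1-t)x)^2}\left|f'\left(\frac{ax}{ta+(1-t)x}\right)\right|dt,\qquad I_2=\int_0^1\frac{t}{(tb+(1-t)x)^2}\left|f'\left(\frac{bx}{tb+(1-t)x}\right)\right|dt,
\end{equation*}
because Lemma \ref{2.1} then gives $\left|f(x)-\frac{ab}{b-a}\int_a^b\frac{f(u)}{u^2}du\right|\leq\frac{ab}{b-a}\left\{(x-a)^2I_1+(b-x)^2I_2\right\}$. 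The two integrals are handled identically, so I describe only $I_1$.

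For $I_1$ the key point is to keep the factor $\frac{t}{(ta+(1-t)x)^2}$ inside the $q$-th power. Since $dt$ is a probability measure on $[0,1]$, the power-mean (Jensen) inequality gives $\int_0^1\phi\,dt\leq\left(\int_0^1\phi^{q}\,dt\right)^{1/q}$ for $q\geq1$; applying it with $\phi(t)=\frac{t}{(ta+(1-t)x)^2}\left|f'\left(\frac{ax}{ta+(1-t)x}\right)\right|$ produces
\begin{equation*}
I_1\leq\left(\int_0^1\frac{t^{q}}{(ta+(1-t)x)^{2q}}\left|f'\left(\frac{ax}{ta+(1-t)x}\right)\right|^{q}dt\right)^{1/q}.
\end{equation*}
(For $q=1$ this step is an equality.) Applying the harmonic $s$-convexity inequality (\ref{2-1}) of $|f'|^{q}$ with its two variables chosen as $a$ and $x$, so that the argument $\frac{ax}{ta+(1-t)x}$ is exactly the corresponding interpolation point, gives $\left|f'\left(\frac{ax}{ta+(1-t)x}\right)\right|^{q}\leq t^{s}|f'(x)|^{q}+(1-t)^{s}|f'(a)|^{q}$. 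Substituting this and integrating termwise (the map $u\mapsto u^{1/q}$ is increasing, so the bound is preserved) yields
\begin{equation*}
I_1\leq\left(|f'(x)|^{q}\int_0^1\frac{t^{q+s}}{(ta+(1-t)x)^{2q}}dt+|f'(a)|^{q}\int_0^1\frac{t^{q}(1-t)^{s}}{(ta+(1-t)x)^{2q}}dt\right)^{1/q}.
\end{equation*}
The same computation for $I_2$, now with endpoints $b$ and $x$, produces the integrals weighting $|f'(x)|^{q}$ and $|f'(b)|^{q}$.

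It remains to evaluate the four integrals, which is the main technical step. For the $a$-integrals I would factor the denominator as $ta+(1-t)x=x\bigl(1-(1-\tfrac{a}{x})t\bigr)$, so that each becomes $x^{-2q}\int_0^1 t^{B-1}(1-t)^{C-B-1}(1-zt)^{-2q}dt$ with $z=1-\tfrac{a}{x}$. This is precisely the Euler integral representation of ${}_2F_1$ recorded in the statement, with first parameter $2q$ and the parameters $B,C$ read off from the exponents of $t$ and $1-t$; this identifies the two integrals as $\lambda_1(a,x,s,q,q)$ and $\lambda_2(a,x,s,q,q)$. For the $b$-integrals I would first substitute $t\mapsto 1-t$ and write $tb+(1-t)x=b\bigl(1-(1-\tfrac{x}{b})t\bigr)$ with $z=1-\tfrac{x}{b}$, after which the same recognition yields $\lambda_3(b,x,s,q,q)$ and $\lambda_4(b,x,s,q,q)$. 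The main obstacle is exactly this bookkeeping: one must match the exponents to the hypergeometric parameters, carry out the $t\mapsto1-t$ substitution correctly in the $b$-integrals, and check the admissibility conditions $c>b>0$ and $|z|<1$ of the Euler representation — here these hold because $0<a\leq x\leq b$ forces $z\in[0,1)$. Collecting the resulting estimates for $I_1$ and $I_2$ in the master inequality of the first paragraph yields (\ref{2-3}).
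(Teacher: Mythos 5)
Your proposal is correct and follows essentially the same route as the paper: the identity of Lemma \ref{2.1}, the power mean inequality in the form $\int_0^1\phi\,dt\leq(\int_0^1\phi^{q}\,dt)^{1/q}$ applied with the full weight kept inside the $q$-th power, the harmonic $s$-convexity of $|f'|^q$, and the identification of the four resulting integrals with Beta and hypergeometric functions. Your treatment of the Euler-integral recognition (including the $t\mapsto 1-t$ substitution for the $b$-integrals and the check that $z\in[0,1)$) is in fact more explicit than the paper's ``an easy calculation gives.''
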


\begin{proof}
From Lemma \ref{2.1}, Power mean inequality and the harmonically $s$%
-convexity of $\left\vert f^{\prime }\right\vert ^{q}$ on $[a,b],$we have%
\begin{eqnarray*}
&&\left\vert f(x)-\frac{ab}{b-a}\dint\limits_{a}^{b}\frac{f(u)}{u^{2}}%
du\right\vert  \\
&\leq &\frac{ab}{b-a}\left\{ \left( x-a\right) ^{2}\dint\limits_{0}^{1}\frac{%
t}{\left( ta+(1-t)x\right) ^{2}}\left\vert f^{\prime }\left( \frac{ax}{%
ta+(1-t)x}\right) \right\vert dt\right.  \\
&&+\left. \left( b-x\right) ^{2}\dint\limits_{0}^{1}\frac{t}{\left(
tb+(1-t)x\right) ^{2}}\left\vert f^{\prime }\left( \frac{bx}{tb+(1-t)x}%
\right) \right\vert dt\right\} 
\end{eqnarray*}%
\begin{eqnarray}
&\leq &\frac{ab\left( x-a\right) ^{2}}{b-a}\left(
\dint\limits_{0}^{1}1dt\right) ^{1-\frac{1}{q}}  \label{2-3a} \\
&&\times \left( \dint\limits_{0}^{1}\frac{t^{q}}{\left( ta+(1-t)x\right)
^{2q}}\left[ t^{s}\left\vert f^{\prime }\left( x\right) \right\vert
^{q}+(1-t)^{s}\left\vert f^{\prime }\left( a\right) \right\vert ^{q}\right]
dt\right) ^{\frac{1}{q}}  \notag
\end{eqnarray}%
\begin{eqnarray*}
&&+\frac{ab\left( b-x\right) ^{2}}{b-a}\left( \dint\limits_{0}^{1}1dt\right)
^{1-\frac{1}{q}} \\
&&\times \left( \dint\limits_{0}^{1}\frac{t^{q}}{\left( tb+(1-t)x\right)
^{2q}}\left[ t^{s}\left\vert f^{\prime }\left( x\right) \right\vert
^{q}+(1-t)^{s}\left\vert f^{\prime }\left( b\right) \right\vert ^{q}\right]
dt\right) ^{\frac{1}{q}},
\end{eqnarray*}%
where an easy calculation gives%
\begin{equation}
\dint\limits_{0}^{1}\frac{t^{q+s}}{\left( ta+(1-t)x\right) ^{2q}}dt=\frac{%
\beta \left( q+s+1,1\right) }{x^{2q}}._{2}F_{1}\left( 2q,q+s+1;q+s+2;1-\frac{%
a}{x}\right) ,  \label{2-3b}
\end{equation}%
\begin{equation*}
\dint\limits_{0}^{1}\frac{t^{q+s}}{\left( tb+(1-t)x\right) ^{2q}}dt=\frac{%
\beta \left( 1,q+s+1\right) }{b^{2q}}._{2}F_{1}\left( 2q,1;q+s+2;1-\frac{x}{b%
}\right) ,
\end{equation*}%
\begin{equation*}
\dint\limits_{0}^{1}\frac{t^{q}(1-t)^{s}}{\left( ta+(1-t)x\right) ^{2q}}dt=%
\frac{\beta \left( q+1,s+1\right) }{x^{2q}}._{2}F_{1}\left( 2q,q+1;s+q+2;1-%
\frac{a}{x}\right) ,
\end{equation*}%
\begin{equation}
\dint\limits_{0}^{1}\frac{t^{q}(1-t)^{s}}{\left( tb+(1-t)x\right) ^{2q}}dt=%
\frac{\beta \left( s+1,q+1\right) }{b^{2q}}._{2}F_{1}\left( 2q,s+1;s+q+2;1-%
\frac{x}{b}\right) .  \label{2-3c}
\end{equation}%
Hence, If we use (\ref{2-3b})-(\ref{2-3c}) in (\ref{2-3a}), we obtain the
desired result. This completes the proof.
\end{proof}

\begin{corollary}
In Theorem \ref{2.3}, additionally, if $|f^{\prime }(x)|\leq M$, $x\in \left[
a,b\right] ,$ then inequality 
\begin{eqnarray*}
&&\left\vert f(x)-\frac{ab}{b-a}\dint\limits_{a}^{b}\frac{f(u)}{u^{2}}%
du\right\vert  \\
&\leq &\frac{ab}{b-a}M\left\{ \left( x-a\right) ^{2}\left( \lambda
_{1}(a,x,s,q,q)+\lambda _{2}((a,x,s,q,q)\right) ^{\frac{1}{q}}\right.  \\
&&+\left. \left( b-x\right) ^{2}\left( \lambda _{3}(b,x,s,q,q)+\lambda
_{4}(b,x,s,q,q)\right) ^{\frac{1}{q}}\right\} 
\end{eqnarray*}%
holds.
\end{corollary}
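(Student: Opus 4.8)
The plan is to obtain this as an immediate specialization of Theorem \ref{2.3}. Under the extra hypothesis $|f'(x)| \le M$ on $[a,b]$, each of the three derivative values entering the bound of Theorem \ref{2.3} satisfies $|f'(x)| \le M$, $|f'(a)| \le M$, and $|f'(b)| \le M$, so that $|f'(x)|^q \le M^q$, $|f'(a)|^q \le M^q$, and $|f'(b)|^q \le M^q$.

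The first step is to insert these bounds into the two bracketed factors on the right-hand side of (\ref{2-3}). Because the coefficients $\lambda_1,\lambda_2,\lambda_3,\lambda_4$ are all nonnegative---being integrals of nonnegative integrands over $[0,1]$, as is visible from (\ref{2-3b})--(\ref{2-3c})---and because $u \mapsto u^{1/q}$ is nondecreasing for $q \ge 1$, I would estimate
\[
\left( \lambda_1 |f'(x)|^q + \lambda_2 |f'(a)|^q \right)^{1/q} \le \left( (\lambda_1 + \lambda_2) M^q \right)^{1/q} = M \left( \lambda_1 + \lambda_2 \right)^{1/q},
\]
and likewise $\left( \lambda_3 |f'(x)|^q + \lambda_4 |f'(b)|^q \right)^{1/q} \le M \left( \lambda_3 + \lambda_4 \right)^{1/q}$, where the common arguments $(a,x,s,q,q)$ and $(b,x,s,q,q)$ of the $\lambda_i$ are suppressed for brevity.

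The final step is to substitute both estimates back into the conclusion of Theorem \ref{2.3} and pull the common factor $M$ outside the braces, which reproduces the asserted inequality with prefactor $\frac{ab}{b-a}M$ and summands $(x-a)^2 (\lambda_1 + \lambda_2)^{1/q}$ and $(b-x)^2 (\lambda_3 + \lambda_4)^{1/q}$. There is no genuine obstacle here: the corollary is pure specialization of the preceding theorem. The only point meriting a moment's attention is the nonnegativity of the $\lambda_i$, which is what justifies that enlarging each $|f'|^q$ to the constant $M^q$ can only increase the right-hand side; this follows at once from the integral representations in (\ref{2-3b})--(\ref{2-3c}).
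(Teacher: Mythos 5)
Your argument is correct and is exactly the intended one: the paper states this corollary without proof, treating it as an immediate specialization of Theorem \ref{2.3}, and your substitution of $M^q$ for each $|f'|^q$, justified by the nonnegativity of the $\lambda_i$ (visible from the integral representations (\ref{2-3b})--(\ref{2-3c})) and the monotonicity of $u\mapsto u^{1/q}$, is precisely what is needed. Nothing is missing.
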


\begin{theorem}
\label{2.4}Let $f:I\subset \left( 0,\infty \right) \rightarrow 
\mathbb{R}
$ be a differentiable function on $I^{\circ }$, $a,b\in I$ with $a<b,$ and $%
f^{\prime }\in L[a,b].$ If $\left\vert f^{\prime }\right\vert ^{q}$ is
harmonically $s$- convex on $[a,b]$ for $q\geq 1,$ then for all $x\in \left[
a,b\right] $, we have%
\begin{equation}
\left\vert f(x)-\frac{ab}{b-a}\dint\limits_{a}^{b}\frac{f(u)}{u^{2}}%
du\right\vert   \label{2-4}
\end{equation}%
\begin{eqnarray*}
&\leq &\frac{ab}{b-a}\left( \frac{1}{2}\right) ^{1-\frac{1}{q}}\left\{
\left( x-a\right) ^{2}\left( \lambda _{1}(a,x,s,q,1)\left\vert f^{\prime
}\left( x\right) \right\vert ^{q}+\lambda _{2}(a,x,s,q,1)\left\vert
f^{\prime }\left( a\right) \right\vert ^{q}\right) ^{\frac{1}{q}}\right.  \\
&&+\left. \left( b-x\right) ^{2}\left( \lambda _{3}(b,x,s,q,1)\left\vert
f^{\prime }\left( x\right) \right\vert ^{q}+\lambda
_{4}(b,x,s,q,1)\left\vert f^{\prime }\left( b\right) \right\vert ^{q}\right)
^{\frac{1}{q}}\right\} 
\end{eqnarray*}%
where $\lambda _{1}$, $\lambda _{2}$, $\lambda _{3}$ and $\lambda _{4}$ are
defined as in Theorem \ref{2.3}.
\end{theorem}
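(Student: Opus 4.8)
The plan is to mimic the proof of Theorem~\ref{2.3} line for line, changing only the weight used in the power-mean inequality; this is exactly what turns the parameter $\rho=q$ into $\rho=1$ in the coefficients $\lambda_i$ and produces the extra factor $\left(\frac{1}{2}\right)^{1-\frac{1}{q}}$. First I would apply Lemma~\ref{2.1} and the triangle inequality to obtain
\begin{align*}
\left\vert f(x)-\frac{ab}{b-a}\int_{a}^{b}\frac{f(u)}{u^{2}}du\right\vert
&\leq \frac{ab}{b-a}\left\{ \left( x-a\right) ^{2}\int_{0}^{1}\frac{t}{\left( ta+(1-t)x\right) ^{2}}\left\vert f^{\prime }\left( \frac{ax}{ta+(1-t)x}\right) \right\vert dt\right. \\
&\quad +\left.\left( b-x\right) ^{2}\int_{0}^{1}\frac{t}{\left( tb+(1-t)x\right) ^{2}}\left\vert f^{\prime }\left( \frac{bx}{tb+(1-t)x}\right) \right\vert dt\right\}.
\end{align*}

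The decisive step, and the only place where this proof departs from that of Theorem~\ref{2.3}, is the choice of weight. In Theorem~\ref{2.3} the factor $\frac{t}{(\,\cdot\,)^{2}}$ is placed entirely inside the $q$-th power, i.e.\ the power-mean inequality is used with the trivial weight $1$, giving a surviving power $t^{q}$. Here I would instead keep a single factor of $t$ as the weight: for each integral I apply the power-mean inequality in the form $\int_{0}^{1}w(t)h(t)\,dt\leq\left(\int_{0}^{1}w(t)\,dt\right)^{1-\frac{1}{q}}\left(\int_{0}^{1}w(t)h(t)^{q}\,dt\right)^{\frac{1}{q}}$ with $w(t)=t$ and $h(t)=\left\vert f^{\prime}\right\vert/(\,\cdot\,)^{2}$. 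Because $\int_{0}^{1}t\,dt=\frac12$, this yields the prefactor $\left(\frac12\right)^{1-\frac1q}$, and the remaining integrals become $\int_{0}^{1}\frac{t}{\left( ta+(1-t)x\right)^{2q}}\left\vert f^{\prime}(\cdots)\right\vert^{q}dt$ and its $b$-analogue, now carrying only the first power of $t$ rather than $t^{q}$.

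Next I would use the harmonic $s$-convexity of $\left\vert f^{\prime}\right\vert^{q}$ exactly as before. Applying (\ref{2-1}) to $\left\vert f^{\prime}\right\vert^{q}$ with endpoints $a,x$ gives $\left\vert f^{\prime}\left(\frac{ax}{ta+(1-t)x}\right)\right\vert^{q}\leq t^{s}\left\vert f^{\prime}(x)\right\vert^{q}+(1-t)^{s}\left\vert f^{\prime}(a)\right\vert^{q}$, and with endpoints $b,x$ the analogous bound involving $\left\vert f^{\prime}(b)\right\vert^{q}$. Substituting these splits each integral into two, leaving the four elementary integrals $\int_{0}^{1}\frac{t^{1+s}}{\left( ta+(1-t)x\right)^{2q}}dt$, $\int_{0}^{1}\frac{t(1-t)^{s}}{\left( ta+(1-t)x\right)^{2q}}dt$ and their $b$-counterparts. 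Writing $ta+(1-t)x=x\left(1-\left(1-\frac{a}{x}\right)t\right)$ and comparing with the Euler-integral representation of $_{2}F_{1}$ identifies the first two with $\lambda_{1}(a,x,s,q,1)$ and $\lambda_{2}(a,x,s,q,1)$; for the $b$-integrals the substitution $t\mapsto1-t$ recasts the denominator as $b\left(1-\left(1-\frac{x}{b}\right)t\right)$ and produces $\lambda_{3}(b,x,s,q,1)$ and $\lambda_{4}(b,x,s,q,1)$. This is verbatim the computation (\ref{2-3b})--(\ref{2-3c}) with the exponent of $t$ lowered from $q$ to $1$.

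I expect no real obstacle: the argument is analytically identical to Theorem~\ref{2.3}, and the work is purely bookkeeping — checking that weighting by $t$ leaves exactly one surviving power $t^{1}$ so that the Beta--hypergeometric integrals match $\lambda_{i}(\cdot,\cdot,s,q,1)$. The hypotheses already guarantee the needed conditions: $q\geq1$ legitimises the power-mean inequality, and $0<a\leq x\leq b$ ensures $\left\vert1-\frac{a}{x}\right\vert<1$ and $\left\vert1-\frac{x}{b}\right\vert<1$, so the hypergeometric representation is valid. Assembling the two bounded integrals then gives (\ref{2-4}).
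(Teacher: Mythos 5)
Your proposal is correct and follows essentially the same route as the paper: apply Lemma \ref{2.1}, use the power-mean inequality with weight $w(t)=t$ so that $\left(\int_{0}^{1}t\,dt\right)^{1-\frac{1}{q}}=\left(\frac{1}{2}\right)^{1-\frac{1}{q}}$ and only a first power of $t$ survives inside the $q$-th root, then invoke harmonic $s$-convexity of $\left\vert f^{\prime}\right\vert^{q}$ and identify the resulting Beta--hypergeometric integrals with $\lambda_{i}(\cdot,\cdot,s,q,1)$. Your identification of the four elementary integrals, including the $t\mapsto 1-t$ substitution for the $b$-side terms, matches the computations (\ref{2-3b})--(\ref{2-3c}) specialized to exponent $1$ on $t$, exactly as the paper's proof does.
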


\begin{proof}
From Lemma \ref{2.1}, Power mean inequality and the harmonically $s$%
-convexity of $\left\vert f^{\prime }\right\vert ^{q}$ on $[a,b],$we have%
\begin{eqnarray}
&&\left\vert f(x)-\frac{ab}{b-a}\dint\limits_{a}^{b}\frac{f(u)}{u^{2}}%
du\right\vert   \label{2-4a} \\
&\leq &\frac{ab\left( x-a\right) ^{2}}{b-a}\left(
\dint\limits_{0}^{1}tdt\right) ^{1-\frac{1}{q}}  \notag \\
&&\times \left( \dint\limits_{0}^{1}\frac{t}{\left( ta+(1-t)x\right) ^{2q}}%
\left[ t^{s}\left\vert f^{\prime }\left( x\right) \right\vert
^{q}+(1-t)^{s}\left\vert f^{\prime }\left( a\right) \right\vert ^{q}\right]
dt\right) ^{\frac{1}{q}}  \notag
\end{eqnarray}%
\begin{eqnarray*}
&&+\frac{ab\left( b-x\right) ^{2}}{b-a}\left( \dint\limits_{0}^{1}tdt\right)
^{1-\frac{1}{q}} \\
&&\times \left( \dint\limits_{0}^{1}\frac{t}{\left( tb+(1-t)x\right) ^{2q}}%
\left[ t^{s}\left\vert f^{\prime }\left( x\right) \right\vert
^{q}+(1-t)^{s}\left\vert f^{\prime }\left( b\right) \right\vert ^{q}\right]
dt\right) ^{\frac{1}{q}}
\end{eqnarray*}%
\begin{eqnarray*}
&\leq &\frac{ab}{b-a}\left( \frac{1}{2}\right) ^{1-\frac{1}{q}}\left\{
\left( x-a\right) ^{2}\left( \lambda _{1}(a,x,s,q,1)\left\vert f^{\prime
}\left( x\right) \right\vert ^{q}+\lambda _{2}(a,x,s,q,1)\left\vert
f^{\prime }\left( a\right) \right\vert ^{q}\right) ^{\frac{1}{q}}\right.  \\
&&+\left. \left( b-x\right) ^{2}\left( \lambda _{3}(b,x,s,q,1)\left\vert
f^{\prime }\left( x\right) \right\vert ^{q}+\lambda
_{4}(b,x,s,q,1)\left\vert f^{\prime }\left( b\right) \right\vert ^{q}\right)
^{\frac{1}{q}}\right\} 
\end{eqnarray*}
This completes the proof.
\end{proof}

\begin{corollary}
In Theorem \ref{2.4}, additionally, if $|f^{\prime }(x)|\leq M$, $x\in \left[
a,b\right] ,$ then inequality 
\begin{eqnarray*}
&&\left\vert f(x)-\frac{ab}{b-a}\dint\limits_{a}^{b}\frac{f(u)}{u^{2}}%
du\right\vert  \\
&\leq &\frac{ab}{b-a}M\left( \frac{1}{2}\right) ^{1-\frac{1}{q}}\left\{
\left( x-a\right) ^{2}\left( \lambda _{1}(a,x,s,q,1)+\lambda
_{2}((a,x,s,q,1)\right) ^{\frac{1}{q}}\right.  \\
&&+\left. \left( b-x\right) ^{2}\left( \lambda _{3}(b,x,s,q,1)+\lambda
_{4}(b,x,s,q,1)\right) ^{\frac{1}{q}}\right\} 
\end{eqnarray*}%
holds.
\end{corollary}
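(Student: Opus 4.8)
The plan is to derive this directly from the inequality established in Theorem \ref{2.4} by substituting the uniform bound on the derivative; no new integration or special-function machinery is required. The starting point is the estimate of Theorem \ref{2.4}, in which the right-hand side is a sum of two terms of the form $\left(\lambda_i\left\vert f^{\prime}(x)\right\vert^{q}+\lambda_j\left\vert f^{\prime}(\cdot)\right\vert^{q}\right)^{1/q}$, weighted by $(x-a)^2$ and $(b-x)^2$ and multiplied by the common factor $\frac{ab}{b-a}\left(\frac12\right)^{1-1/q}$.

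The first step is to record that the coefficients $\lambda_1,\lambda_2,\lambda_3,\lambda_4$ evaluated at $(\,\cdot\,,s,q,1)$ are nonnegative. This follows from their definitions in Theorem \ref{2.3}: each is a positive Beta-function value times a hypergeometric factor $_{2}F_{1}$, and since $a\le x\le b$ the arguments satisfy $z=1-\frac{a}{x}\in[0,1)$ and $z=1-\frac{x}{b}\in[0,1)$, so the integral representation $_{2}F_{1}(a,b;c;z)=\frac{1}{\beta(b,c-b)}\int_{0}^{1}t^{b-1}(1-t)^{c-b-1}(1-zt)^{-a}\,dt$ has a strictly positive integrand on $(0,1)$ for the parameter ranges occurring here. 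Hence each $\lambda_i\ge 0$, and the substitution in the next step can only increase the right-hand side.

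The second step is the substitution itself. Under the additional hypothesis $\left\vert f^{\prime}(u)\right\vert\le M$ for all $u\in[a,b]$, one has $\left\vert f^{\prime}(x)\right\vert^{q}\le M^{q}$, $\left\vert f^{\prime}(a)\right\vert^{q}\le M^{q}$, and $\left\vert f^{\prime}(b)\right\vert^{q}\le M^{q}$. Because the $\lambda_i$ are nonnegative, replacing each derivative value by $M^q$ yields, for the first bracket,
\begin{equation*}
\left(\lambda_{1}(a,x,s,q,1)\left\vert f^{\prime}(x)\right\vert^{q}+\lambda_{2}(a,x,s,q,1)\left\vert f^{\prime}(a)\right\vert^{q}\right)^{\frac1q}\le M\left(\lambda_{1}(a,x,s,q,1)+\lambda_{2}(a,x,s,q,1)\right)^{\frac1q},
\end{equation*}
and analogously $\left(\lambda_{3}\left\vert f^{\prime}(x)\right\vert^{q}+\lambda_{4}\left\vert f^{\prime}(b)\right\vert^{q}\right)^{1/q}\le M\left(\lambda_{3}+\lambda_{4}\right)^{1/q}$, where the factor $M$ is pulled out of each parenthesis using $(M^q\,c)^{1/q}=M\,c^{1/q}$ for $c\ge 0$.

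The final step is simply to insert these two bounds back into the estimate of Theorem \ref{2.4} and collect the common factor $M$, which produces exactly the claimed inequality. I do not anticipate any genuine obstacle; the only point requiring care is the justification in the first step that the $\lambda_i$ are nonnegative, since the monotone replacement of $\left\vert f^{\prime}\right\vert^{q}$ by $M^{q}$ inside the $1/q$-power is valid only when the multiplying coefficients have the correct sign.
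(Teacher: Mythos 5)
Your proposal is correct and is exactly the argument the paper intends: the corollary follows from Theorem \ref{2.4} by bounding $\left\vert f^{\prime }\left( x\right) \right\vert ^{q}$, $\left\vert f^{\prime }\left( a\right) \right\vert ^{q}$, $\left\vert f^{\prime }\left( b\right) \right\vert ^{q}$ by $M^{q}$ and factoring $M$ out of each $1/q$-power (the paper states the corollary without proof, treating it as immediate). Your added remark that the $\lambda _{i}$ are nonnegative --- which is clear since they equal integrals of nonnegative functions such as $\dint\limits_{0}^{1}\frac{t^{1+s}}{\left( ta+(1-t)x\right) ^{2q}}dt$ --- is a sensible piece of due diligence but does not change the route.
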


\begin{theorem}
\label{2.5}Let $f:I\subset \left( 0,\infty \right) \rightarrow 
\mathbb{R}
$ be a differentiable function on $I^{\circ }$, $a,b\in I$ with $a<b,$ and $%
f^{\prime }\in L[a,b].$ If $\left\vert f^{\prime }\right\vert ^{q}$ is
harmonically $s$- convex on $[a,b]$ for $q\geq 1,$ then for all $x\in \left[
a,b\right] $, we have%
\begin{equation}
\left\vert f(x)-\frac{ab}{b-a}\dint\limits_{a}^{b}\frac{f(u)}{u^{2}}%
du\right\vert   \label{2-5}
\end{equation}%
\begin{eqnarray*}
&\leq &\frac{ab}{b-a}\left\{ \lambda _{5}^{1-\frac{1}{q}}(a,x)\left(
x-a\right) ^{2}\left( \lambda _{1}(a,x,s,1,1)\left\vert f^{\prime }\left(
x\right) \right\vert ^{q}+\lambda _{2}(a,x,s,1,1)\left\vert f^{\prime
}\left( a\right) \right\vert ^{q}\right) ^{\frac{1}{q}}\right.  \\
&&+\left. \lambda _{5}^{1-\frac{1}{q}}(b,x)\left( b-x\right) ^{2}\left(
\lambda _{3}(b,x,s,1,1)\left\vert f^{\prime }\left( x\right) \right\vert
^{q}+\lambda _{4}(b,x,s,1,1)\left\vert f^{\prime }\left( b\right)
\right\vert ^{q}\right) ^{\frac{1}{q}}\right\} 
\end{eqnarray*}%
where 
\begin{equation*}
\lambda _{5}(\theta ,x)=\frac{1}{x-\theta }\left\{ \frac{1}{\theta }-\frac{%
\ln x-\ln \theta }{x-\theta }\right\} ,
\end{equation*}%
and $\lambda _{1}$, $\lambda _{2}$, $\lambda _{3}$ and $\lambda _{4}$ are
defined as in Theorem \ref{2.3}.
\end{theorem}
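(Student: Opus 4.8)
The plan is to mirror the proofs of Theorems \ref{2.3} and \ref{2.4}, the only substantive change being the choice of weight in the power mean (H\"older) step. Starting from the integral identity of Lemma \ref{2.1}, I would pass to absolute values and apply the triangle inequality, so that the quantity to be bounded splits into a term attached to the subinterval $[a,x]$ (with kernel $t/(ta+(1-t)x)^{2}$ and argument $ax/(ta+(1-t)x)$) and a symmetric term attached to $[x,b]$ (with $a$ replaced by $b$). I will treat the first term in detail; the second is identical up to the substitution $a\mapsto b$.

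The decisive step, which distinguishes Theorem \ref{2.5} from its predecessors, is to take the \emph{entire} kernel $w(t)=t/(ta+(1-t)x)^{2}$ as the H\"older weight rather than $1$ (as in Theorem \ref{2.3}) or $t$ (as in Theorem \ref{2.4}). Concretely, I would write
\[
\frac{t}{(ta+(1-t)x)^{2}}\left\vert f'\left(\frac{ax}{ta+(1-t)x}\right)\right\vert
=\left(\frac{t}{(ta+(1-t)x)^{2}}\right)^{1-\frac{1}{q}}
\left(\frac{t}{(ta+(1-t)x)^{2}}\right)^{\frac{1}{q}}
\left\vert f'\left(\frac{ax}{ta+(1-t)x}\right)\right\vert
\]
and apply H\"older's inequality with exponents $q/(q-1)$ and $q$. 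The factor $(\int_{0}^{1}w(t)\,dt)^{1-1/q}$ then produces $\lambda_{5}^{1-1/q}(a,x)$; I would verify by the substitution $u=ta+(1-t)x$ that
\[
\int_{0}^{1}\frac{t}{(ta+(1-t)x)^{2}}\,dt
=\frac{1}{x-a}\left\{\frac{1}{a}-\frac{\ln x-\ln a}{x-a}\right\}
=\lambda_{5}(a,x).
\]

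For the remaining factor I would invoke the harmonic $s$-convexity of $|f'|^{q}$ to bound $|f'(ax/(ta+(1-t)x))|^{q}$ by $t^{s}|f'(x)|^{q}+(1-t)^{s}|f'(a)|^{q}$, and then recognize the two resulting integrals
\[
\int_{0}^{1}\frac{t^{1+s}}{(ta+(1-t)x)^{2}}\,dt,\qquad
\int_{0}^{1}\frac{t(1-t)^{s}}{(ta+(1-t)x)^{2}}\,dt
\]
as precisely $\lambda_{1}(a,x,s,1,1)$ and $\lambda_{2}(a,x,s,1,1)$; these are the $q=1$ specializations of the Beta/hypergeometric identities (\ref{2-3b})--(\ref{2-3c}) already established in the proof of Theorem \ref{2.3} (denominator power $2$ forces $\vartheta=1$, and the leftover factor $t$ forces $\rho=1$). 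Carrying out the same computation for the $[x,b]$ term produces $\lambda_{5}^{1-1/q}(b,x)$ together with $\lambda_{3}(b,x,s,1,1)$ and $\lambda_{4}(b,x,s,1,1)$, and summing the two contributions, each scaled by $ab(x-a)^{2}/(b-a)$ and $ab(b-x)^{2}/(b-a)$ respectively, yields the stated bound. I do not anticipate any genuine obstacle beyond bookkeeping: the one point requiring care is keeping the H\"older exponents straight so that the $(1-1/q)$-power attaches to the weight integral $\lambda_{5}$ while the $1/q$-power carries the $s$-convexity estimate, and checking that $q=1$ (where the first factor degenerates to $1$) is covered without incident.
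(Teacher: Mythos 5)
Your proposal is correct and follows essentially the same route as the paper: the paper's proof likewise applies the power mean (weighted H\"older) inequality with the full kernel $t/(ta+(1-t)x)^{2}$ as weight, evaluates $\int_{0}^{1}t\,(ta+(1-t)x)^{-2}\,dt$ to get $\lambda_{5}$, and identifies the remaining integrals as the $q=1$ specializations of (\ref{2-3b})--(\ref{2-3c}). No substantive difference.
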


\begin{proof}
From Lemma \ref{2.1}, Power mean inequality and the harmonically $s$%
-convexity of $\left\vert f^{\prime }\right\vert ^{q}$ on $[a,b],$we have%
\begin{eqnarray}
&&\left\vert f(x)-\frac{ab}{b-a}\dint\limits_{a}^{b}\frac{f(u)}{u^{2}}%
du\right\vert   \label{2-5a} \\
&\leq &\frac{ab\left( x-a\right) ^{2}}{b-a}\left( \dint\limits_{0}^{1}\frac{t%
}{\left( ta+(1-t)x\right) ^{2}}dt\right) ^{1-\frac{1}{q}}  \notag \\
&&\times \left( \dint\limits_{0}^{1}\frac{t}{\left( ta+(1-t)x\right) ^{2}}%
\left[ t^{s}\left\vert f^{\prime }\left( x\right) \right\vert
^{q}+(1-t)^{s}\left\vert f^{\prime }\left( a\right) \right\vert ^{q}\right]
dt\right) ^{\frac{1}{q}}  \notag
\end{eqnarray}%
\begin{eqnarray*}
&&+\frac{ab\left( b-x\right) ^{2}}{b-a}\left( \dint\limits_{0}^{1}\frac{t}{%
\left( tb+(1-t)x\right) ^{2}}dt\right) ^{1-\frac{1}{q}} \\
&&\times \left( \dint\limits_{0}^{1}\frac{t}{\left( tb+(1-t)x\right) ^{2}}%
\left[ t^{s}\left\vert f^{\prime }\left( x\right) \right\vert
^{q}+(1-t)^{s}\left\vert f^{\prime }\left( b\right) \right\vert ^{q}\right]
dt\right) ^{\frac{1}{q}}.
\end{eqnarray*}%
It is easily check that%
\begin{equation}
\dint\limits_{0}^{1}\frac{t}{\left( ta+(1-t)x\right) ^{2}}dt=\frac{1}{x-a}%
\left\{ \frac{1}{a}-\frac{\ln x-\ln a}{x-a}\right\} ,  \label{2-5b}
\end{equation}%
\begin{equation*}
\dint\limits_{0}^{1}\frac{t}{\left( tb+(1-t)x\right) ^{2}}dt=\frac{1}{b-x}%
\left\{ \frac{\ln b-\ln x}{b-x}-\frac{1}{b}\right\} ,
\end{equation*}%
Hence, If we use (\ref{2-3b})-(\ref{2-3c}) \ for $q=1$ and (\ref{2-5b}) in (%
\ref{2-5a}), we obtain the desired result. This completes the proof.
\end{proof}

\begin{corollary}
In Theorem \ref{2.5}, additionally, if $|f^{\prime }(x)|\leq M$, $x\in \left[
a,b\right] ,$ then inequality 
\begin{eqnarray*}
&&\left\vert f(x)-\frac{ab}{b-a}\dint\limits_{a}^{b}\frac{f(u)}{u^{2}}%
du\right\vert  \\
&\leq &\frac{ab}{b-a}M\left\{ \lambda _{5}^{1-\frac{1}{q}}(a,x)\left(
x-a\right) ^{2}\left( \lambda _{1}(a,x,s,1,1)+\lambda _{2}(a,x,s,1,1)\right)
^{\frac{1}{q}}\right.  \\
&&+\left. \lambda _{5}^{1-\frac{1}{q}}(b,x)\left( b-x\right) ^{2}\left(
\lambda _{3}(b,x,s,1,1)+\lambda _{4}(b,x,s,1,1)\right) ^{\frac{1}{q}%
}\right\} 
\end{eqnarray*}%
holds.
\end{corollary}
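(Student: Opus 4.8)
The plan is to derive the corollary by a direct substitution into the inequality established in Theorem~\ref{2.5}, exploiting the uniform bound $|f'|\le M$ to replace the three derivative values appearing there by the common constant $M$. Since $a$, $x$, and $b$ all lie in $[a,b]$, the added hypothesis $|f'(x)|\le M$ on $[a,b]$ gives at once $|f'(x)|^{q}\le M^{q}$, $|f'(a)|^{q}\le M^{q}$, and $|f'(b)|^{q}\le M^{q}$.

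Next I would estimate the two inner factors of the Theorem~\ref{2.5} bound separately. In the first factor, using the nonnegativity of $\lambda_1(a,x,s,1,1)$ and $\lambda_2(a,x,s,1,1)$, I replace each derivative value by $M^{q}$ to obtain $\lambda_1|f'(x)|^{q}+\lambda_2|f'(a)|^{q}\le M^{q}\left(\lambda_1+\lambda_2\right)$. Because $q\ge 1$, the map $t\mapsto t^{1/q}$ is nondecreasing on $[0,\infty)$, so raising to the power $1/q$ yields $\left(\lambda_1|f'(x)|^{q}+\lambda_2|f'(a)|^{q}\right)^{1/q}\le M\left(\lambda_1+\lambda_2\right)^{1/q}$. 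The identical argument applied to the second factor gives $\left(\lambda_3|f'(x)|^{q}+\lambda_4|f'(b)|^{q}\right)^{1/q}\le M\left(\lambda_3+\lambda_4\right)^{1/q}$, where here $\lambda_3=\lambda_3(b,x,s,1,1)$ and $\lambda_4=\lambda_4(b,x,s,1,1)$.

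Finally I would substitute these two estimates back into the conclusion of Theorem~\ref{2.5} and factor the common $M$ out of the braces; the weights $\lambda_5^{1-1/q}(a,x)(x-a)^2$ and $\lambda_5^{1-1/q}(b,x)(b-x)^2$ are left untouched, so this produces exactly the asserted inequality. The only point requiring a word of justification is the nonnegativity of the coefficients $\lambda_i$, which is what legitimizes the term-by-term replacement of $|f'|^{q}$ by $M^{q}$; this holds because each $\lambda_i$ is a product of a Beta-function value $\beta(\cdot,\cdot)$ and a hypergeometric value ${}_{2}F_{1}$ evaluated at arguments $1-a/x,\,1-x/b\in[0,1)$, all of which are positive throughout the admissible ranges $s\in(0,1]$, $q\ge 1$, and $0<a\le x\le b$. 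There is no genuine obstacle here: the corollary is an immediate specialization of Theorem~\ref{2.5} once this positivity is recorded.
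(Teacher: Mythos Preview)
Your proposal is correct and matches the paper's approach: the paper states this corollary without proof, treating it as an immediate consequence of Theorem~\ref{2.5} via the substitution $|f'(a)|^{q},|f'(x)|^{q},|f'(b)|^{q}\le M^{q}$. Your added remark on the positivity of the $\lambda_i$ is a welcome bit of rigor that the paper leaves implicit.
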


\begin{theorem}
\label{2.6}Let $f:I\subset \left( 0,\infty \right) \rightarrow 
\mathbb{R}
$ be a differentiable function on $I^{\circ }$, $a,b\in I$ with $a<b,$ and $%
f^{\prime }\in L[a,b].$ If $\left\vert f^{\prime }\right\vert ^{q}$ is
harmonically $s$-convex on $[a,b]$ for $q>1,\;\frac{1}{p}+\frac{1}{q}=1,$
then%
\begin{equation}
\left\vert f(x)-\frac{ab}{b-a}\dint\limits_{a}^{b}\frac{f(u)}{u^{2}}%
du\right\vert   \label{2-6}
\end{equation}%
\begin{eqnarray*}
&\leq &\frac{ab}{b-a}\left( \frac{1}{p+1}\right) ^{\frac{1}{p}}\left\{
\left( x-a\right) ^{2}\left( \lambda _{1}(a,x,s,q,0)\left\vert f^{\prime
}\left( x\right) \right\vert ^{q}+\lambda _{2}(a,x,s,q,0)\left\vert
f^{\prime }\left( a\right) \right\vert ^{q}\right) ^{\frac{1}{q}}\right.  \\
&&+\left. \left( b-x\right) ^{2}\left( \lambda _{3}(b,x,s,q,0)\left\vert
f^{\prime }\left( x\right) \right\vert ^{q}+\lambda
_{4}(b,x,s,q,0)\left\vert f^{\prime }\left( b\right) \right\vert ^{q}\right)
^{\frac{1}{q}}\right\} .
\end{eqnarray*}%
where $\lambda _{1}$, $\lambda _{2}$, $\lambda _{3}$ and $\lambda _{4}$ are
defined as in Theorem \ref{2.3}.
\end{theorem}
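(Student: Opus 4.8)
The plan is to run the same scheme as in the proofs of Theorems \ref{2.3}--\ref{2.5}, but with H\"older's inequality in place of the power mean inequality. As in those proofs, the starting point is Lemma \ref{2.1}: taking absolute values and applying the triangle inequality moves $\left\vert f^{\prime }\right\vert $ inside and gives
\begin{eqnarray*}
\left\vert f(x)-\frac{ab}{b-a}\dint\limits_{a}^{b}\frac{f(u)}{u^{2}}du\right\vert
&\leq &\frac{ab\left( x-a\right) ^{2}}{b-a}\dint\limits_{0}^{1}\frac{t}{\left( ta+(1-t)x\right) ^{2}}\left\vert f^{\prime }\left( \frac{ax}{ta+(1-t)x}\right) \right\vert dt \\
&&+\frac{ab\left( b-x\right) ^{2}}{b-a}\dint\limits_{0}^{1}\frac{t}{\left( tb+(1-t)x\right) ^{2}}\left\vert f^{\prime }\left( \frac{bx}{tb+(1-t)x}\right) \right\vert dt.
\end{eqnarray*}

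Next I would apply H\"older's inequality to each integral with the conjugate exponents $p,q$, writing the integrand $\frac{t}{\left( ta+(1-t)x\right) ^{2}}\left\vert f^{\prime }\right\vert $ as the product of $t$ and $\frac{1}{\left( ta+(1-t)x\right) ^{2}}\left\vert f^{\prime }\right\vert $. This separates off the constant $\left( \int_{0}^{1}t^{p}dt\right) ^{1/p}=\left( \frac{1}{p+1}\right) ^{1/p}$ and leaves $\left( \int_{0}^{1}\frac{\left\vert f^{\prime }\right\vert ^{q}}{\left( ta+(1-t)x\right) ^{2q}}dt\right) ^{1/q}$, and likewise for the $b$-term. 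This is essentially the only real choice in the argument: putting all of the $t$ weight into the $p$-factor is what forces the parameter $\rho =0$ in the functions $\lambda _{i}$, in contrast to the values $\rho =q$ and $\rho =1$ used in Theorems \ref{2.3}--\ref{2.5}.

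I would then use the harmonic $s$-convexity of $\left\vert f^{\prime }\right\vert ^{q}$: applying (\ref{2-1}) with endpoints $a$ and $x$ yields $\left\vert f^{\prime }\left( \frac{ax}{ta+(1-t)x}\right) \right\vert ^{q}\leq t^{s}\left\vert f^{\prime }(x)\right\vert ^{q}+(1-t)^{s}\left\vert f^{\prime }(a)\right\vert ^{q}$, and the analogous bound with endpoints $b$ and $x$ for the second integrand. Substituting these and separating the $\left\vert f^{\prime }(x)\right\vert ^{q}$, $\left\vert f^{\prime }(a)\right\vert ^{q}$ and $\left\vert f^{\prime }(b)\right\vert ^{q}$ terms reduces the problem to the four elementary integrals $\int_{0}^{1}\frac{t^{s}}{\left( ta+(1-t)x\right) ^{2q}}dt$, $\int_{0}^{1}\frac{(1-t)^{s}}{\left( ta+(1-t)x\right) ^{2q}}dt$ and their $b$-analogues.

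The last step is to evaluate these integrals in closed form. Factoring $\left( ta+(1-t)x\right) ^{2q}=x^{2q}\left( 1-\left( 1-\frac{a}{x}\right) t\right) ^{2q}$ and matching the Euler integral representation of $_{2}F_{1}$ identifies them with $\lambda _{1},\lambda _{2},\lambda _{3},\lambda _{4}$ evaluated at $(a,x,s,q,0)$ and $(b,x,s,q,0)$; these are exactly the identities (\ref{2-3b})--(\ref{2-3c}) specialized to $\rho =0$. Collecting the factor $\left( \frac{1}{p+1}\right) ^{1/p}$ out front then gives (\ref{2-6}). Since this closed-form evaluation is the same Beta/hypergeometric computation already carried out for Theorem \ref{2.3}, I do not expect any genuine obstacle here; the only thing needing care is carrying the exponents $\vartheta =q$ and $\rho =0$ through consistently.
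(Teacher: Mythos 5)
Your proposal is correct and follows essentially the same route as the paper: starting from Lemma \ref{2.1}, applying H\"older's inequality with the split $t\cdot\frac{1}{(ta+(1-t)x)^{2}}\left\vert f^{\prime}\right\vert$ to extract $\left(\frac{1}{p+1}\right)^{1/p}$, invoking the harmonic $s$-convexity of $\left\vert f^{\prime}\right\vert^{q}$, and evaluating the remaining integrals via the Beta/hypergeometric identities at $\vartheta=q$, $\rho=0$. No substantive differences.
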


\begin{proof}
From Lemma \ref{2.1}, H\"{o}lder's inequality and the harmonically convexity
of $\left\vert f^{\prime }\right\vert ^{q}$ on $[a,b],$we have%
\begin{eqnarray*}
&&\left\vert f(x)-\frac{ab}{b-a}\dint\limits_{a}^{b}\frac{f(u)}{u^{2}}%
du\right\vert  \\
&\leq &\frac{ab\left( x-a\right) ^{2}}{b-a}\left(
\dint\limits_{0}^{1}t^{p}dt\right) ^{\frac{1}{p}} \\
&&\times \left( \dint\limits_{0}^{1}\frac{1}{\left( ta+(1-t)x\right) ^{2q}}%
\left[ t^{s}\left\vert f^{\prime }\left( x\right) \right\vert
^{q}+(1-t)^{s}\left\vert f^{\prime }\left( a\right) \right\vert ^{q}\right]
dt\right) ^{\frac{1}{q}}
\end{eqnarray*}%
\begin{eqnarray*}
&&+\frac{ab\left( b-x\right) ^{2}}{b-a}\left(
\dint\limits_{0}^{1}t^{p}dt\right) ^{\frac{1}{p}} \\
&&\times \left( \dint\limits_{0}^{1}\frac{1}{\left( tb+(1-t)x\right) ^{2q}}%
\left[ t^{s}\left\vert f^{\prime }\left( x\right) \right\vert
^{q}+(1-t)^{s}\left\vert f^{\prime }\left( b\right) \right\vert ^{q}\right]
dt\right) ^{\frac{1}{q}}
\end{eqnarray*}%
\begin{eqnarray*}
&\leq &\frac{ab}{b-a}\left( \frac{1}{p+1}\right) ^{\frac{1}{p}}\left\{
\left( x-a\right) ^{2}\left( \lambda _{1}(a,x,s,q,0)\left\vert f^{\prime
}\left( x\right) \right\vert ^{q}+\lambda _{2}(a,x,s,q,0)\left\vert
f^{\prime }\left( a\right) \right\vert ^{q}\right) ^{\frac{1}{q}}\right.  \\
&&+\left. \left( b-x\right) ^{2}\left( \lambda _{3}(b,x,s,q,0)\left\vert
f^{\prime }\left( x\right) \right\vert ^{q}+\lambda
_{4}(b,x,s,q,0)\left\vert f^{\prime }\left( b\right) \right\vert ^{q}\right)
^{\frac{1}{q}}\right\} .
\end{eqnarray*}%
This completes the proof.
\end{proof}

\begin{corollary}
In Theorem \ref{2.6}, additionally, if $|f^{\prime }(x)|\leq M$, $x\in \left[
a,b\right] ,$ then inequality 
\begin{eqnarray*}
&&\left\vert f(x)-\frac{ab}{b-a}\dint\limits_{a}^{b}\frac{f(u)}{u^{2}}%
du\right\vert  \\
&\leq &\frac{ab}{b-a}M\left( \frac{1}{p+1}\right) ^{\frac{1}{p}}\left\{
\left( x-a\right) ^{2}\left( \lambda _{1}(a,x,s,q,0)+\lambda
_{2}(a,x,s,q,0)\right) ^{\frac{1}{q}}\right.  \\
&&+\left. \left( b-x\right) ^{2}\left( \lambda _{3}(b,x,s,q,0)+\lambda
_{4}(b,x,s,q,0)\right) ^{\frac{1}{q}}\right\} 
\end{eqnarray*}%
holds.
\end{corollary}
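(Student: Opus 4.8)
The plan is to invoke Theorem \ref{2.6} as a black box and then merely feed in the uniform bound $|f'|\le M$; no integral estimate needs to be redone. Since the hypotheses of the corollary are precisely those of Theorem \ref{2.6} together with the extra assumption $|f'(t)|\le M$ for every $t\in[a,b]$, the inequality (\ref{2-6}) holds verbatim, and it suffices to bound its right-hand side from above.

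First I would record that $|f'(x)|^q\le M^q$, $|f'(a)|^q\le M^q$ and $|f'(b)|^q\le M^q$, which is immediate from $x,a,b\in[a,b]$ together with raising the inequality $|f'|\le M$ (where $M\ge 0$) to the $q$-th power. Next, observing that each of $\lambda_1(a,x,s,q,0)$, $\lambda_2(a,x,s,q,0)$, $\lambda_3(b,x,s,q,0)$, $\lambda_4(b,x,s,q,0)$ is nonnegative---being a product of an Euler Beta value and a Gauss hypergeometric value whose arguments all lie in the ranges guaranteeing positivity---I would estimate each parenthesized factor of (\ref{2-6}) from above, e.g.
\[
\lambda_1(a,x,s,q,0)\,|f'(x)|^q+\lambda_2(a,x,s,q,0)\,|f'(a)|^q \le M^q\bigl(\lambda_1(a,x,s,q,0)+\lambda_2(a,x,s,q,0)\bigr),
\]
and analogously for the factor containing $\lambda_3$ and $\lambda_4$.

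Finally I would apply the map $u\mapsto u^{1/q}$, which is nondecreasing on $[0,\infty)$ because $q\ge 1>0$, to each of these two estimates; since $(M^q c)^{1/q}=M\,c^{1/q}$ for $M,c\ge 0$, the common factor $M$ pulls out of both $\tfrac1q$-powers. Substituting the resulting bounds back into (\ref{2-6}) and moving the constant $M$ outside the braces yields exactly the asserted inequality. The one point deserving a word of justification is the nonnegativity of the $\lambda_i$, without which the substitution $|f'|^q\le M^q$ would not preserve the direction of the inequality; everything else is a direct consequence of Theorem \ref{2.6} and the monotonicity of the power function, so I expect no genuine obstacle.
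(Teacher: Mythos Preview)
Your proposal is correct and matches the paper's approach: the corollary is stated there without proof, as it follows immediately from Theorem \ref{2.6} by replacing each $|f'(\cdot)|^q$ with the bound $M^q$ and factoring $M$ out of the $\tfrac{1}{q}$-powers. Your explicit mention of the nonnegativity of the $\lambda_i$ is a welcome addition that the paper leaves implicit.
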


\begin{theorem}
\label{2.7}Let $f:I\subset \left( 0,\infty \right) \rightarrow 
\mathbb{R}
$ be a differentiable function on $I^{\circ }$, $a,b\in I$ with $a<b,$ and $%
f^{\prime }\in L[a,b].$ If $\left\vert f^{\prime }\right\vert ^{q}$ is
harmonically $s$-convex on $[a,b]$ for $q>1,\;\frac{1}{p}+\frac{1}{q}=1,$
then%
\begin{equation*}
\left\vert f(x)-\frac{ab}{b-a}\dint\limits_{a}^{b}\frac{f(u)}{u^{2}}%
du\right\vert 
\end{equation*}%
\begin{eqnarray*}
&\leq &\frac{ab}{b-a}\left\{ \left( \lambda _{1}(a,x,0,p,p)\right) ^{\frac{1%
}{p}}\left( x-a\right) ^{2}\left( \frac{\left\vert f^{\prime }\left(
x\right) \right\vert ^{q}+\left\vert f^{\prime }\left( a\right) \right\vert
^{q}}{s+1}\right) ^{\frac{1}{q}}\right.  \\
&&+\left. \left( \lambda _{3}(b,x,0,p,p)\right) ^{\frac{1}{p}}\left(
b-x\right) ^{2}\left( \frac{\left\vert f^{\prime }\left( x\right)
\right\vert ^{q}+\left\vert f^{\prime }\left( b\right) \right\vert ^{q}}{s+1}%
\right) ^{\frac{1}{q}}\right\} .
\end{eqnarray*}%
where $\lambda _{1}$, $\lambda _{2}$, $\lambda _{3}$ and $\lambda _{4}$ are
defined as in Theorem \ref{2.3}.
\end{theorem}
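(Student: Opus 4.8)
The plan is to follow the same template as the proofs of Theorems \ref{2.3}--\ref{2.6}: start from the identity in Lemma \ref{2.1}, pass to absolute values, and apply H\"older's inequality to each of the two resulting integrals. The only genuine difference from Theorem \ref{2.6} lies in how the kernel $\frac{t}{(ta+(1-t)x)^{2}}$ is split between the two H\"older factors. In Theorem \ref{2.6} one peels off $t^{p}$ and keeps the denominator together with the $s$-convexity term; here, by contrast, I would push the \emph{entire} kernel $\frac{t}{(ta+(1-t)x)^{2}}$ into the $p$-factor, leaving only $\left\vert f^{\prime }(\cdot )\right\vert ^{q}$ in the $q$-factor, so that the latter integrates to an elementary beta constant.

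First I would take absolute values in Lemma \ref{2.1} to get
\begin{eqnarray*}
&&\left\vert f(x)-\frac{ab}{b-a}\dint\limits_{a}^{b}\frac{f(u)}{u^{2}}du\right\vert \\
&\leq &\frac{ab}{b-a}\left\{ \left( x-a\right) ^{2}\dint\limits_{0}^{1}\frac{t}{\left( ta+(1-t)x\right) ^{2}}\left\vert f^{\prime }\left( \frac{ax}{ta+(1-t)x}\right) \right\vert dt\right. \\
&&+\left. \left( b-x\right) ^{2}\dint\limits_{0}^{1}\frac{t}{\left( tb+(1-t)x\right) ^{2}}\left\vert f^{\prime }\left( \frac{bx}{tb+(1-t)x}\right) \right\vert dt\right\} .
\end{eqnarray*}
To the first integral I would then apply H\"older's inequality with exponents $p$ and $q$, retaining the full kernel in the first factor:
\begin{equation*}
\dint\limits_{0}^{1}\frac{t}{\left( ta+(1-t)x\right) ^{2}}\left\vert f^{\prime }\left( \frac{ax}{ta+(1-t)x}\right) \right\vert dt\leq \left( \dint\limits_{0}^{1}\frac{t^{p}}{\left( ta+(1-t)x\right) ^{2p}}dt\right) ^{\frac{1}{p}}\left( \dint\limits_{0}^{1}\left\vert f^{\prime }\left( \frac{ax}{ta+(1-t)x}\right) \right\vert ^{q}dt\right) ^{\frac{1}{q}},
\end{equation*}
and symmetrically for the $b$-integral.

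Next I would evaluate the $p$-factor using the integral formula (\ref{2-3b}) specialized to $s=0$ with $q$ replaced by $p$, which gives exactly $\dint_{0}^{1}\frac{t^{p}}{(ta+(1-t)x)^{2p}}dt=\lambda _{1}(a,x,0,p,p)$; the corresponding formula in the block (\ref{2-3b})--(\ref{2-3c}) yields $\lambda _{3}(b,x,0,p,p)$ for the $b$-integral. For the $q$-factor I would invoke the harmonically $s$-convexity of $\left\vert f^{\prime }\right\vert ^{q}$: since $\frac{ax}{ta+(1-t)x}$ runs between $a$ and $x$, inequality (\ref{2-1}) gives $\left\vert f^{\prime }\left( \frac{ax}{ta+(1-t)x}\right) \right\vert ^{q}\leq t^{s}\left\vert f^{\prime }(x)\right\vert ^{q}+(1-t)^{s}\left\vert f^{\prime }(a)\right\vert ^{q}$, and integrating termwise with $\dint_{0}^{1}t^{s}dt=\dint_{0}^{1}(1-t)^{s}dt=\frac{1}{s+1}$ produces $\frac{\left\vert f^{\prime }(x)\right\vert ^{q}+\left\vert f^{\prime }(a)\right\vert ^{q}}{s+1}$; the $b$-integral is handled identically. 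Assembling the two pieces reproduces the claimed bound.

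Every step here is routine, so there is no substantive obstacle; the only point requiring care is the bookkeeping on the $\lambda$-indices, namely recognizing that absorbing the whole kernel into the $p$-factor forces the parameter choice $s=0$ and $\vartheta =\rho =p$ in $\lambda _{1},\lambda _{3}$ (in contrast to $\vartheta =\rho =q$ with the genuine $s$ in Theorem \ref{2.3}, or $\rho =0$ in Theorem \ref{2.6}). It is worth noting that $\lambda _{2}$ and $\lambda _{4}$ do not actually occur in the final estimate: the $(1-t)^{s}$ contribution is no longer weighted by a hypergeometric kernel but is evaluated by the elementary beta value $\frac{1}{s+1}$, so the statement's reference to $\lambda _{2},\lambda _{4}$ is merely carried over from the earlier theorems.
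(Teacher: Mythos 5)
Your proposal is correct and follows essentially the same route as the paper: the paper likewise applies H\"older's inequality with the entire kernel $\frac{t}{(ta+(1-t)x)^{2}}$ absorbed into the $p$-factor (yielding $\lambda _{1}(a,x,0,p,p)$ and $\lambda _{3}(b,x,0,p,p)$ via the integral formulas (\ref{2-3b})--(\ref{2-3c})) and bounds the remaining $q$-factor by harmonic $s$-convexity and the elementary integral $\int_{0}^{1}t^{s}dt=\frac{1}{s+1}$. Your observation that $\lambda _{2}$ and $\lambda _{4}$ play no role in this particular bound is also accurate.
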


\begin{proof}
From Lemma \ref{2.1}, H\"{o}lder's inequality and the harmonically convexity
of $\left\vert f^{\prime }\right\vert ^{q}$ on $[a,b],$we have%
\begin{eqnarray*}
&&\left\vert f(x)-\frac{ab}{b-a}\dint\limits_{a}^{b}\frac{f(u)}{u^{2}}%
du\right\vert  \\
&\leq &\frac{ab\left( x-a\right) ^{2}}{b-a}\left( \dint\limits_{0}^{1}\frac{%
t^{p}}{\left( ta+(1-t)x\right) ^{2p}}dt\right) ^{\frac{1}{p}} \\
&&\times \left( \dint\limits_{0}^{1}\left[ t^{s}\left\vert f^{\prime }\left(
x\right) \right\vert ^{q}+(1-t)^{s}\left\vert f^{\prime }\left( a\right)
\right\vert ^{q}\right] dt\right) ^{\frac{1}{q}}
\end{eqnarray*}%
\begin{eqnarray*}
&&+\frac{ab\left( b-x\right) ^{2}}{b-a}\left( \dint\limits_{0}^{1}\frac{t^{p}%
}{\left( tb+(1-t)x\right) ^{2p}}dt\right) ^{\frac{1}{p}} \\
&&\times \left( \dint\limits_{0}^{1}\left[ t^{s}\left\vert f^{\prime }\left(
x\right) \right\vert ^{q}+(1-t)^{s}\left\vert f^{\prime }\left( b\right)
\right\vert ^{q}\right] dt\right) ^{\frac{1}{q}}
\end{eqnarray*}%
\begin{eqnarray*}
&\leq &\frac{ab}{b-a}\left\{ \left( \lambda _{1}(a,x,0,p,p)\right) ^{\frac{1%
}{p}}\left( x-a\right) ^{2}\left( \frac{\left\vert f^{\prime }\left(
x\right) \right\vert ^{q}+\left\vert f^{\prime }\left( a\right) \right\vert
^{q}}{s+1}\right) ^{\frac{1}{q}}\right.  \\
&&+\left. \left( \lambda _{3}(b,x,0,p,p)\right) ^{\frac{1}{p}}\left(
b-x\right) ^{2}\left( \frac{\left\vert f^{\prime }\left( x\right)
\right\vert ^{q}+\left\vert f^{\prime }\left( b\right) \right\vert ^{q}}{s+1}%
\right) ^{\frac{1}{q}}\right\} .
\end{eqnarray*}%
This completes the proof.
\end{proof}

\begin{corollary}
In Theorem \ref{2.7}, additionally, if $|f^{\prime }(x)|\leq M$, $x\in \left[
a,b\right] ,$ then inequality 
\begin{eqnarray*}
&&\left\vert f(x)-\frac{ab}{b-a}\dint\limits_{a}^{b}\frac{f(u)}{u^{2}}%
du\right\vert  \\
&\leq &\frac{ab}{b-a}M\left( \frac{2}{s+1}\right) ^{\frac{1}{q}}\left\{
\left( \lambda _{1}(a,x,0,p,p)\right) ^{\frac{1}{p}}\left( x-a\right)
^{2}\right.  \\
&&+\left. \left( \lambda _{3}(b,x,0,p,p)\right) ^{\frac{1}{p}}\left(
b-x\right) ^{2}\right\} 
\end{eqnarray*}%
holds.
\end{corollary}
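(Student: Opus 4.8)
The plan is to reuse verbatim the template from the proofs of Theorems \ref{2.3}--\ref{2.6}: begin with the integral identity of Lemma \ref{2.1}, pass to absolute values, and split by the triangle inequality. The feature that distinguishes this statement from Theorem \ref{2.6} is purely in \emph{how} H\"older's inequality is applied, so the first step is to record that Lemma \ref{2.1} together with the triangle inequality bounds the quantity of interest by
\[
\frac{ab}{b-a}\left\{(x-a)^2\int_0^1\frac{t}{\left(ta+(1-t)x\right)^2}\left|f'\!\left(\frac{ax}{ta+(1-t)x}\right)\right|dt+(b-x)^2\int_0^1\frac{t}{\left(tb+(1-t)x\right)^2}\left|f'\!\left(\frac{bx}{tb+(1-t)x}\right)\right|dt\right\}.
\]

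Next I would apply H\"older's inequality to each integral, but with the singular factor $\left(ta+(1-t)x\right)^{-2}$ kept in the $p$-part rather than in the $q$-part (this is the exact opposite grouping to Theorem \ref{2.6}). That is, with $g(t)=t\left(ta+(1-t)x\right)^{-2}$ and $h(t)=\left|f'(\cdots)\right|$ one gets
\[
\int_0^1\frac{t}{\left(ta+(1-t)x\right)^2}\left|f'(\cdots)\right|dt\le\left(\int_0^1\frac{t^p}{\left(ta+(1-t)x\right)^{2p}}dt\right)^{1/p}\left(\int_0^1\left|f'(\cdots)\right|^q dt\right)^{1/q},
\]
and likewise for the $b$-integral. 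The point of this grouping is that the second factor carries no singular weight at all, so invoking the harmonic $s$-convexity of $\left|f'\right|^q$ (inequality (\ref{2-1}) applied to the endpoint pair $(a,x)$, resp.\ $(b,x)$) gives $\left|f'\left(\frac{ax}{ta+(1-t)x}\right)\right|^q\le t^s\left|f'(x)\right|^q+(1-t)^s\left|f'(a)\right|^q$, and integrating term by term uses only $\int_0^1 t^s\,dt=\int_0^1(1-t)^s\,dt=\frac{1}{s+1}$ to produce the clean factor $\frac{1}{s+1}\left(\left|f'(x)\right|^q+\left|f'(a)\right|^q\right)$ and its $b$-analogue.

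It then remains to evaluate the two $p$-integrals in closed form. These are precisely the integrals already computed in (\ref{2-3b})--(\ref{2-3c}): specializing those formulas to $s=0$ and replacing $q$ by $p$ yields
\[
\int_0^1\frac{t^p}{\left(ta+(1-t)x\right)^{2p}}dt=\lambda_1(a,x,0,p,p),\qquad \int_0^1\frac{t^p}{\left(tb+(1-t)x\right)^{2p}}dt=\lambda_3(b,x,0,p,p),
\]
obtained from the integral representation of ${}_2F_1$ after writing $\left(ta+(1-t)x\right)^{-2p}=x^{-2p}\left(1-(1-\frac{a}{x})t\right)^{-2p}$ for the first and performing the substitution $t\mapsto 1-t$ to bring the second into the analogous form with $x/b$. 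Substituting these back into the H\"older bound produces exactly the stated inequality.

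I expect no genuine obstacle: the skeleton is identical to the earlier theorems and the only nontrivial ingredient, the Beta-type evaluation of $\int_0^1 t^p\left(ta+(1-t)x\right)^{-2p}dt$, was already carried out in Theorem \ref{2.3}. The one thing to be careful about is bookkeeping of parameters: one must check that the $s=0$, $q\mapsto p$ specialization of (\ref{2-3b}) really matches the definition of $\lambda_1(a,x,0,p,p)$ (and, with the $t\mapsto 1-t$ swap that interchanges the Beta arguments, that of $\lambda_3(b,x,0,p,p)$), and that the exponent $p$ on $\left|f'\right|$ in the H\"older split is consistent with $\frac{1}{p}+\frac{1}{q}=1$ so that the weight-free $q$-integral collapses to $\frac{1}{s+1}\left(\left|f'(x)\right|^q+\left|f'(a)\right|^q\right)$ as claimed.
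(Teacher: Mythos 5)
Your derivation correctly reproduces the paper's proof of Theorem \ref{2.7}: the Lemma \ref{2.1} identity, the H\"older split with the singular weight kept in the $p$-factor, the collapse of the weight-free $q$-integral to $\frac{1}{s+1}\left(\left\vert f^{\prime }(x)\right\vert ^{q}+\left\vert f^{\prime }(a)\right\vert ^{q}\right)$ via harmonic $s$-convexity, and the identification of the $p$-integrals with $\lambda _{1}(a,x,0,p,p)$ and $\lambda _{3}(b,x,0,p,p)$ all match the paper, and your parameter bookkeeping against (\ref{2-3b})--(\ref{2-3c}) checks out.

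However, what you have proved at the end is the inequality of Theorem \ref{2.7} itself, not the stated corollary: your argument never invokes the hypothesis $|f^{\prime }(x)|\leq M$, which is the entire additional content here. The corollary follows from Theorem \ref{2.7} by the one remaining step $\left\vert f^{\prime }(x)\right\vert ^{q}+\left\vert f^{\prime }(a)\right\vert ^{q}\leq 2M^{q}$ (and likewise with $b$ in place of $a$), so that $\left( \frac{\left\vert f^{\prime }(x)\right\vert ^{q}+\left\vert f^{\prime }(a)\right\vert ^{q}}{s+1}\right) ^{\frac{1}{q}}\leq M\left( \frac{2}{s+1}\right) ^{\frac{1}{q}}$, which produces the common factor $M\left( \frac{2}{s+1}\right) ^{\frac{1}{q}}$ in the displayed bound. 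This is exactly how the paper obtains the corollary (it states it as an immediate consequence of Theorem \ref{2.7}); your proposal is therefore the right approach but stops one line short of the claimed inequality.
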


\end{document}